\newfont{\footsc}{cmcsc10 at 8truept}
\newfont{\footbf}{cmbx10 at 8truept}
\newfont{\footrm}{cmr10 at 10truept}
\newtheorem{theorem}{Theorem}
\newtheorem{conjecture}[theorem]{Conjecture}
\newtheorem{corollary}[theorem]{Corollary}
\newtheorem{lemma}[theorem]{Lemma}
\newenvironment{proof}[1][Proof]{\noindent{\textbf {#1}  }}  {\hfill$\Box$\bigskip}
\begin{document}

\title{\textbf{Maxima of the }$Q$\textbf{-index: forbidden even cycles} }
\author{Vladimir Nikiforov\thanks{Department of Mathematical Sciences, University of
Memphis, Memphis TN 38152, USA; \textit{email: vnikiforv@memphis.edu}} \ and
Xiying Yuan\thanks{Corresponding author. Department of Mathematics, Shanghai
University, Shanghai, 200444, China; email:
\textit{xiyingyuan2007@hotmail.com}} \thanks{Research supported by National
Science Foundation of China (No. 11101263), and by a grant of
\textquotedblleft The First-class Discipline of Universities in
Shanghai\textquotedblright.} }
\maketitle

\begin{abstract}
Let $G$ be a graph of order $n$ and let $q\left(  G\right)  $ be the largest
eigenvalue of the signless Laplacian of $G$. Let $S_{n,k}$ be the graph
obtained by joining each vertex of a complete graph of order $k$ to each
vertex of an independent set of order $n-k;$ and let $S_{n,k}^{+}$ be the
graph obtained by adding an edge to $S_{n,k}.$

It is shown that if $k\geq2,$ $n\geq400k^{2},$ and $G$ is a graph of order
$n,$ with no cycle of length $2k+2,$ then $q\left(  G\right)  <q\left(
S_{n,k}^{+}\right)  ,$ unless $G=S_{n,k}^{+}.$\ This result completes the
proof of a conjecture of de Freitas, Nikiforov and Patuzzi.\smallskip

\textbf{AMS classification: }\textit{15A42, 05C50}

\textbf{Keywords:}\textit{ signless Laplacian; largest eigenvalue; forbidden
cycles; spectral extremal problems.}

\end{abstract}

\section{Introduction}

Given a graph $G,$ the $Q$-index of $G$ is the largest eigenvalue $q\left(
G\right)  $ of its signless Laplacian $Q\left(  G\right)  $. In this paper we
study how large can $q\left(  G\right)  $ be if $G$ is a graph of given order
and contains no cycle of given even length.

Thus, let $S_{n,k}$ be the graph obtained by joining each vertex of a complete
graph of order $k$ to each vertex of an independent set of order $n-k,$ that
is to say, $S_{n,k}=K_{k}\vee\overline{K}_{n-k}.$ Also, let $S_{n,k}^{+}$ be
the graph obtained by adding an edge to $S_{n,k}.$ In \cite{FNP13} the
following conjecture has been raised:

\begin{conjecture}
\label{con1} Let $k\geq2$ and let $G$ be a graph of sufficiently large order
$n.$ If $G$ has no cycle of length $2k+1,$ then $q\left(  G\right)  <q\left(
S_{n,k}\right)  ,$ unless $G=S_{n,k}.$ If $G$ has no cycle of length $2k+2,$
then $q\left(  G\right)  <q\left(  S_{n,k}^{+}\right)  ,$ unless
$G=S_{n,k}^{+}.$
\end{conjecture}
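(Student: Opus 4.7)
I would argue by contradiction: assume $G$ is a $C_{2k+2}$-free graph of order $n\ge 400k^2$ with $q(G)\ge q(S_{n,k}^+)$ and $G\ne S_{n,k}^+$, and derive a contradiction. The first step is a sharp lower bound for $q(S_{n,k}^+)$. Since $S_{n,k}=K_k\vee \overline{K}_{n-k}$ has only two orbits under its automorphism group, its signless Laplacian has an equitable quotient of size $2$, from which $q(S_{n,k})$ is seen to be the larger root of an explicit monic quadratic, giving $q(S_{n,k})=n+2k-2-O(k^2/n)$. Since $S_{n,k}^+$ adds an edge to the connected graph $S_{n,k}$, we have $q(S_{n,k}^+)>q(S_{n,k})$, so the assumption gives $q(G)\ge n+2k-2-o(1)$ as $n/k^2\to\infty$.

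Next I would extract structural information from this spectral bound by analysing the Perron eigenvector $\mathbf{x}$, normalised so that $\max_v x_v=x_u=1$. The eigenvalue equation $q=d(u)+\sum_{v\sim u}x_v\le 2d(u)$ yields $d(u)>n/2$, while $\sum_{v\sim u}x_v\ge q-d(u)\ge 2k-2-o(1)$ forces many neighbours of $u$ to have near-maximal Perron weight. Iterating this bootstrap and using the hypothesis $n\ge 400k^2$ to keep error terms under control, I distill a set $K\subseteq V(G)$ with $x$-entries $1-o(1)$ and degrees at least $n-O(k)$. A short case analysis then forces $|K|=k$: fewer than $k$ such vertices cannot support $q(G)\ge n+2k-2-o(1)$, while more than $k$ such vertices are pairwise adjacent with large common neighbourhood, admitting a $C_{2k+2}$ that alternates $k+1$ of them with $k+1$ common leaves, contradicting the $C_{2k+2}$-free hypothesis.

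Once the core $K$ is in hand, I would show that $V(G)\setminus K$ spans at most one edge. If some vertex $w\notin K$ had two neighbours $w',w''\notin K$, then the cycle $w'-w-w''-v_1-w_1-v_2-w_2-\cdots-v_k-w'$ has length exactly $2k+2$, where $v_1,\ldots,v_k$ enumerate $K$ and $w_1,\ldots,w_{k-1}$ are auxiliary leaves (which exist because $K$ dominates almost all of $V(G)\setminus K$). A parallel construction with two vertex-disjoint edges in $V(G)\setminus K$ splits $K$ into two arcs to produce $C_{2k+2}$ again. Consequently every edge of $G$ lies in $S_{n,k}^+$ for the obvious choice of core, so $G$ is a subgraph of $S_{n,k}^+$, and the strict monotonicity of $q$ under edge addition in a connected graph forces $G=S_{n,k}^+$, contradicting our assumption.

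The main obstacle is the second step: turning the single spectral inequality into the rigid combinatorial statement that exactly $k$ vertices have near-unit Perron weight and near-full degree, with all errors controlled uniformly in $k$. Balancing the second-moment estimates coming from the eigenvalue equation against the cycle-construction constraints is where the explicit hypothesis $n\ge 400k^2$ is consumed. The cycle constructions of the third step are combinatorially clean once $K$ is identified, although parity considerations for even versus odd $k$ may require separate handling.
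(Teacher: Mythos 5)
Your proposal addresses only the even-cycle half of the conjecture, but that is the half the paper itself proves (the odd case is cited to prior work), so the scope is appropriate. However, your route diverges sharply from the paper's, and the crucial step is left as an acknowledged gap that I do not believe is easily bridged.

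Your plan is to normalise the Perron vector with $\max_v x_v=1$ at some vertex $u$, observe $q=d(u)+\sum_{v\sim u}x_v\le 2d(u)$ to get $d(u)>n/2$, and then ``bootstrap'' the inequality $\sum_{v\sim u}x_v\ge q-d(u)\ge 2k-2-o(1)$ into the conclusion that there are roughly $k$ vertices with Perron weight $1-o(1)$ and degree $n-O(k)$. This inference does not follow: with $|\Gamma(u)|\approx n$ the sum $\sum_{v\sim u}x_v$ can exceed $2k-2$ while every $x_v$ is as small as $O(k/n)$, so the eigenequation at $u$ gives no lower bound on any individual $x_v$. To extract a $k$-element core one needs far more: typically a degree-variance or edge-count argument (via a Rayleigh bound or an Erd\H os--Gallai-type edge estimate), then a delicate structural analysis of the near-extremal graph. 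You explicitly flag this as ``the main obstacle,'' but the proposal contains no mechanism for it, and that is precisely where the theorem is hard. There is a secondary gap in the cycle constructions: if the $P_3$ you find in $V(G)\setminus K$ has endvertices $w',w''$ of degree one (adjacent only to the middle vertex $w$), they have no neighbours in $K$ and your $C_{2k+2}$ does not close; ruling out such pendant structures requires an eigenvector-surgery argument of the kind the paper carries out in its Claims 1--5.

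For comparison, the paper does not attempt a direct Perron-vector bootstrap. It applies the Merris bound $q(G)\le\max_u\{d(u)+\frac{1}{d(u)}\sum_{v\sim u}d(v)\}$ together with Lemma~\ref{Nl} (a path-counting refinement of Erd\H os--Gallai) to show the Merris-optimal vertex $w$ has $d(w)\ge n-2$; it then reduces to a dominating vertex via a small exchange. With a dominating vertex in hand, Lemma~\ref{l1} lets one discard small components of $G_w$ at controlled spectral cost, after which the structural stability Theorem~\ref{AS+} for $P_{2k+3}$-free graphs of minimum degree $k$ (from \cite{NiYu13}) classifies the components of $G_w$; the remaining Claims~2--5 use repeated local eigenvector-surgery (moving edges toward higher-weight vertices) to force $G_w=S^+_{n-1,k-1}$. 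The two approaches differ fundamentally: the paper converts the spectral hypothesis to a degree/edge hypothesis early on and does the structural work combinatorially via a pre-established nonspectral stability theorem, whereas you try to carry the Perron vector through the whole argument. The former is what actually makes the uniform bound $n\ge 400k^2$ attainable; the bootstrap route, as written, is not a proof.
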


In \cite{Nik13} it was shown that Conjecture \ref{con1} is asymptotically true
and some proof technique has been outlined. In \cite{Yua14} and \cite{Yua14a}
the second author developed this technique further and succeeded to prove
Conjecture \ref{con1} for forbidden odd cycles. In this paper we shall prove
the remaining case of the conjecture, which turns out to be by far more
difficult than the odd case. Thus, our main result is the following theorem:

\begin{theorem}
\label{mt} Let $k\geq2,$ $n\geq400k^{2},$ and let $G$ be a graph of order $n$.
If $G$ has no cycle of length $2k+2,$ then $q\left(  G\right)  <q\left(
S_{n,k}^{+}\right)  ,$ unless $G=S_{n,k}^{+}.$
\end{theorem}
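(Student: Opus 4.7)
I would begin by establishing a sharp lower bound on $q(S_{n,k}^{+})$. The partition of $V(S_{n,k})$ into the clique of size $k$ and the independent set of size $n-k$ is equitable, and the corresponding $2\times 2$ quotient matrix for the signless Laplacian $Q(S_{n,k})$ is $\begin{pmatrix} n+k-2 & n-k \\ k & k\end{pmatrix}$. Its larger eigenvalue is $q(S_{n,k})=n+2k-2-O(k^{2}/n)$, and since adding an edge cannot decrease $q$, we have $q(S_{n,k}^{+})\ge q(S_{n,k})>n+2k-2-1/k$ for $n\ge 400k^{2}$. For the rest of the argument I would assume, for contradiction, that $G$ is a $C_{2k+2}$-free graph of order $n$ with $q(G)\ge q(S_{n,k}^{+})$ and $G\ne S_{n,k}^{+}$; let $x$ be the Perron eigenvector of $Q(G)$, normalized so that $x_{u}=\max_{v}x_{v}=1$.

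The next step would be to extract structural information from the eigenvalue equation $q(G)\,x_{u}=d(u)+\sum_{w\in N(u)}x_{w}$. Combining $q(G)\ge n+2k-2-o(1)$ with upper bounds on $e(G)$ coming from Bondy--Simonovits ($\operatorname{ex}(n,C_{2k+2})=O(n^{1+1/(k+1)})$) and with Merris-type estimates of the form $q(G)\le \max_{v}\{d(v)+m(v)\}$, I would argue first that $d(u)\ge n-O(k)$ and then, by iterating the eigenvector argument on $u$ and on its high-weight neighbors, that there are at least $k$ vertices of weight close to $1$ and of degree close to $n$. This singles out a ``core'' set $K\subseteq V(G)$ of $k$ vertices that should play the role of the clique in $S_{n,k}^{+}$.

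The heart of the argument is then to show that $|K|=k$ exactly, that $K$ induces a clique, and that $V(G)\setminus K$ contains at most one edge. If $K$ contained $k+1$ vertices each of degree $n-O(k)$, a Hall-type matching on their common neighbors would yield an alternating $C_{2k+2}$, contradicting the hypothesis; this is precisely the obstruction that caps $|K|$ at $k$. Similarly, two edges inside $V(G)\setminus K$, together with $k-1$ alternating transitions through $K$, would produce a $C_{2k+2}$. The ``exactly one extra edge'' outcome is what separates $S_{n,k}^{+}$ from $S_{n,k}$, and a direct eigenvector comparison (using $q(S_{n,k})<q(S_{n,k}^{+})$) rules out the edge-free case. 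This pinpoints $G$ as a spanning subgraph of $S_{n,k}^{+}$, after which extremality of $q$ under edge addition forces $G=S_{n,k}^{+}$.

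The main obstacle, as the authors hint in calling the even case ``by far more difficult,'' is controlling the combinatorics of $C_{2k+2}$: unlike the odd case treated in \cite{Yua14,Yua14a}, where every sufficiently long alternating walk in a dense bipartite-like block closes into an odd cycle, the even target demands a parity match between the number of visits to $K$ and to $V\setminus K$. Constructing a $C_{2k+2}$ from two edges in $V\setminus K$ when those edges may share endpoints or lie near each other is therefore delicate, and the quantitative ``stability'' step -- promoting a weak $d(u)\ge (1-\varepsilon)n$ to $d(u)\ge n-O(k)$ for each vertex of $K$ -- must be carried out carefully enough that the hypothesis $n\ge 400k^{2}$ suffices and that the single admissible edge of $V(G)\setminus K$ can be located exactly.
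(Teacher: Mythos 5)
Your sketch identifies the right target shape (a clique of size $k$ plus an almost-independent complement), but the mechanism for getting there diverges from the paper's and has genuine gaps at the crucial junctures.

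First, the step from $q(G)\geq q(S_{n,k}^{+})$ to ``some vertex has degree $n-O(k)$'' is not supported by the tools you cite. Bondy--Simonovits only gives $e(G)=O(n^{1+1/(k+1)})$, which for $n\geq 400k^{2}$ is far weaker than the lower bound $e(G)\geq kn-k^{2}+1$ that the Das bound already forces, so it supplies no leverage at all. A Merris-type estimate $q\leq d(w)+\frac{1}{d(w)}\sum_{v\sim w}d(v)$ by itself does not pin $d(w)$ near $n$ either, since $\sum_{v\sim w}d(v)$ could a priori be as large as $d(w)\Delta(G)$. The device that is missing from your sketch is Lemma \ref{Nl}: because $G_{w}$ has no $P_{2k+1}$ with both endvertices in $\Gamma(w)$ (such a path would close through $w$ into a $C_{2k+2}$), one gets $2e(\Gamma(w))+e(\Gamma(w),V\setminus\Gamma(w))\leq(2k-1)d(w)+k(n-1-d(w))$, and it is precisely this inequality fed into the Merris bound, together with convexity in $d(w)$, that crushes $d(w)$ up to $n-2$. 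Without a lemma of this type the degree promotion does not go through, and the rest of your plan has no foothold.

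Second, ``iterating the eigenvector argument to produce $k$ heavy vertices'' is asserted but is not a real step, and the paper does not take this route. It finds a single dominating vertex $w$ (treating $d(w)=n-2$ separately at the end), after which $G_{w}$ is $P_{2k+1}$-free; the remaining work is a combinatorial classification of $G_{w}$ built on the stability theorem for $P_{2k+3}$-free connected graphs with $\delta\geq k$ (Theorem \ref{AS+}), preceded by a component-cleaning argument (Lemma \ref{l1}) and a minimum-degree peeling (Claim 2). Your Hall-type matching idea addresses only the final layer -- forbidding a $(k+1)$st heavy vertex and a second extra edge -- and even there the edge-sharing and proximity cases you flag as delicate are exactly where the argument would have to live; the paper resolves those not by matchings but by repeated eigenvector-surgery comparisons (Claims 4 and 5) that exploit the assumed maximality of $q(G)$. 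So the sketch is a plausible programme, but the two places where you wave your hands -- the degree promotion and the passage from one heavy vertex to $k$ -- are precisely where the paper introduces its non-obvious tools, and filling those holes would essentially recreate the paper's proof rather than offer an alternative.
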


In the next section we prepare the ground for the proof of Theorem \ref{mt}
and in Section \ref{pf} we give the proof itself. At the end we give a sum up
of our work.

\section{Notation and supporting results}

For graph notation and concepts undefined here, we refer the reader to
\cite{Bol98}. For introductory material on the signless Laplacian see the
survey of Cvetkovi\'{c} \cite{Cve10} and its references. In particular, let
$G$ be a graph, and $X$ and $Y$ be disjoint sets of vertices of $G.$ We write:

- $V\left(  G\right)  $ for the set of vertices of $G$ and $E\left(  G\right)
$ for the set of edges of $G;$

- $\left\vert G\right\vert $ for the number of vertices of $G$ and $e\left(
G\right)  $ for the number of edges of $G$;

- $G\left[  X\right]  $ for the graph induced by $X,$ and $e\left(  X\right)
$ for $e\left(  G\left[  X\right]  \right)  ;$

- $G_{u}$ for the graph induced by the set $V\left(  G\right)  \backslash
\left\{  u\right\}  ,$ where $u\in V\left(  G\right)  ;$

- $e\left(  X,Y\right)  $ for the number of edges joining vertices in $X$ to
vertices in $Y;$

- $\Gamma_{G}\left(  u\right)  $ (or simply $\Gamma\left(  u\right)  $) for
the set of neighbors of a vertex $u,$ and $d_{G}\left(  u\right)  $ (or simply
$d\left(  u\right)  $) for $\left\vert \Gamma\left(  u\right)  \right\vert
.\medskip$

We write $P_{k},$ $C_{k},$ and $K_{k}$ for the path, cycle, and complete graph
of order $k.$ We write $F\subset G$ to indicate that $F$ is a subgraph of $G,$
and we say that a graph $G$ is $F$\emph{-free} if $G$ contains no subgraphs
isomorphic to $F.$

\subsection{Some auxiliary results}

Here we state several known results, all of which are used in Section
\ref{pf}. We start with a result of Dirac \cite{Dir52}.

\begin{theorem}
\label{Dit} If $G$ is a graph with $\delta\left(  G\right)  \geq2,$ then $G$
contains a cycle longer than $\delta\left(  G\right)  .$
\end{theorem}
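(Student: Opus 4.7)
The plan is to argue by means of a longest path, a standard device for producing long cycles under a minimum-degree hypothesis. Let $P = v_0 v_1 \cdots v_\ell$ be a longest path in $G$. The first observation, forced by the maximality of $P$, is that every neighbor of the endpoint $v_0$ must already lie on $P$; otherwise we could prepend such a neighbor to $P$ and obtain a strictly longer path, a contradiction.

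Next I would use the hypothesis $\delta(G) \geq 2$ to ensure $v_0$ has at least $\delta(G)$ neighbors, all of which, by the previous observation, are among $v_1, v_2, \ldots, v_\ell$. Let $i$ be the largest index such that $v_0 v_i \in E(G)$. Since $v_0$ has at least $\delta(G)$ neighbors and these sit at distinct positive indices on $P$, the maximum such index satisfies $i \geq \delta(G)$.

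Finally, I would close the argument by exhibiting the cycle $v_0 v_1 \cdots v_i v_0$, which has length $i + 1 \geq \delta(G) + 1 > \delta(G)$. This produces a cycle longer than $\delta(G)$, as required.

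There is no serious obstacle here: the proof rests on two simple facts, namely that no neighbor of an endpoint of a longest path can escape that path, and that having at least $\delta(G)$ neighbors on a path forces the rightmost neighbor to sit at position at least $\delta(G)$. The one detail worth stating carefully is the counting step in the previous sentence, since it is the only place where the numerical bound $\delta(G)+1$ on the cycle length actually comes from.
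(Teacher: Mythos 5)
The paper states this as a classical theorem of Dirac and cites \cite{Dir52} without giving a proof, so there is no in-paper argument to compare against. Your longest-path argument is correct and is the standard textbook proof of this result: the maximality of $P$ traps all neighbors of $v_0$ on $P$, the counting step correctly forces the farthest such neighbor $v_i$ to have index $i \geq d(v_0) \geq \delta(G)$, and the hypothesis $\delta(G)\geq 2$ guarantees $i\geq 2$ so that $v_0 v_1 \cdots v_i v_0$ is a genuine cycle of length $i+1 > \delta(G)$.
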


Dirac's result has been further developed by Erd\H{o}s and Gallai
\cite{ErGa59}. We shall need the following classical theorems from their paper.

\begin{theorem}
\label{EGp}Let $k\geq1.$ If $G$ is a graph of order $n,$ with no $P_{k+2},$
then $e\left(  G\right)  \leq kn/2,$ with equality holding if and only if $G$
is a union of disjoint copies of $K_{k+1}.$
\end{theorem}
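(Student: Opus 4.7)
The plan is to proceed by induction on $n$, extracting a vertex of small degree via a longest path.

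Fix a longest path $P=v_0v_1\cdots v_\ell$ in $G$. Since $G$ is $P_{k+2}$-free, $\ell\le k$, and maximality of $P$ forces $\Gamma(v_0),\Gamma(v_\ell)\subseteq V(P)$. The core of the argument is the following Pósa-style observation: suppose $v_0v_i\in E(G)$ and $v_\ell v_{i-1}\in E(G)$ for the same index $i$. Then $v_0v_1\cdots v_{i-1}v_\ell v_{\ell-1}\cdots v_iv_0$ is a cycle $C$ of length $\ell+1$ on $V(P)$. When $G$ is connected and $n\ge\ell+2$, some vertex outside $V(P)$ has a neighbor on $C$, and concatenating it with a traversal of $C$ yields a path of length $\ell+1$, contradicting the maximality of $P$. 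Hence, writing $S=\{i\ge1:v_0v_i\in E\}\subseteq\{1,\dots,\ell\}$ and $T=\{j:v_\ell v_j\in E\}\subseteq\{0,\dots,\ell-1\}$, the shifted set $S-1$ is disjoint from $T$ inside $\{0,\dots,\ell-1\}$, so $d(v_0)+d(v_\ell)=|S|+|T|\le\ell\le k$. Consequently some $v\in\{v_0,v_\ell\}$ satisfies $d(v)\le k/2$.

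Deleting such a $v$ preserves $P_{k+2}$-freeness, so by induction $e(G-v)\le k(n-1)/2$, giving $e(G)\le k(n-1)/2+k/2=kn/2$. The base case $n\le k+1$ follows from $e(G)\le\binom{n}{2}\le kn/2$, and the disconnected case is handled by summing the inductive bound over components, noting that components of order $<k+1$ satisfy the bound strictly since $\binom{n_i}{2}<kn_i/2$.

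The main obstacle is the equality analysis. If $e(G)=kn/2$ with $G$ connected and $n\ge k+2$, then the induction forces $d(v)=k/2$ and $e(G-v)=k(n-1)/2$, so by induction $G-v$ is a vertex-disjoint union of copies of $K_{k+1}$. Connectivity of $G$ produces an edge from $v$ into some such component, and combining this edge with a Hamilton path of that $K_{k+1}$ yields a copy of $P_{k+2}$, a contradiction. Hence a connected extremal graph must have $n=k+1$, in which case $e(G)=\binom{k+1}{2}$ forces $G=K_{k+1}$. For disconnected $G$ with equality, every component must individually saturate the inductive bound and, as just shown, have order exactly $k+1$ and equal $K_{k+1}$, giving the claimed classification.
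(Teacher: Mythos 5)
Your argument is correct. Note, however, that the paper does not actually prove Theorem~\ref{EGp}: it is stated as a classical theorem of Erd\H{o}s and Gallai and cited to \cite{ErGa59}, so there is no in-paper proof to compare against. The route you take --- fix a longest path $P=v_0\cdots v_\ell$, use maximality to confine $\Gamma(v_0),\Gamma(v_\ell)$ to $V(P)$, observe via the P\'osa rotation that $S-1$ and $T$ are disjoint in $\{0,\dots,\ell-1\}$ (a coincidence would produce a spanning cycle of $V(P)$, which together with connectivity and $n\ge\ell+2$ extends $P$), conclude $d(v_0)+d(v_\ell)\le\ell\le k$, delete a low-degree endpoint, and induct --- is the standard longest-path proof of this result, and your treatment of the base case, the disconnected case, and the extremal characterization is sound. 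One cosmetic point worth making explicit: integrality gives $\min\{d(v_0),d(v_\ell)\}\le\lfloor k/2\rfloor$, so the equality step ``$d(v)=k/2$'' tacitly forces $k$ even for a connected graph on $n\ge k+2$ vertices; this costs nothing, since strict inequality then already holds for odd $k$, while your Hamilton-path-through-$v$ argument producing a $P_{k+2}$ disposes of the even case, so connected extremal graphs indeed have $n=k+1$ and equal $K_{k+1}$.
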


\begin{theorem}
\label{EGc}Let $k\geq2.$ If $G$ is a graph of order $n,$ with no cycle longer
than $k,$ then $e\left(  G\right)  \leq k\left(  n-1\right)  /2,$ with
equality holding if and only if $G$ is a union of copies of $K_{k},$ all
sharing a single vertex.
\end{theorem}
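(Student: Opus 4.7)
The plan is to prove Theorem \ref{EGc} by induction on $n$. The base case $n \leq k$ follows from $e(G) \leq \binom{n}{2} \leq k(n-1)/2$, with equality only when $n = k$ and $G = K_k$. For the inductive step with $n \geq k + 1$, I would first reduce to connected $G$: summing the bound over components of sizes $n_1,\ldots,n_m$ gives $e(G) \leq k(n-m)/2$, and this matches $k(n-1)/2$ only when the number of components $m$ equals $1$.

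Suppose for contradiction $e(G) > k(n-1)/2$, and pick a vertex $v$ of minimum degree. If $d(v) \leq k/2$, then $G - v$ has $n - 1 \geq k$ vertices, no cycle of length greater than $k$, and $e(G - v) \geq e(G) - k/2 > k(n-2)/2$, directly contradicting the inductive hypothesis applied to $G - v$. Hence one may assume $\delta(G) > k/2$, and the main obstacle is to derive a contradiction in this regime by exhibiting a cycle of length strictly greater than $k$.

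Theorem \ref{Dit} is too weak here, as it only produces a cycle of length $> \delta(G) \geq k/2 + 1$. My plan is to upgrade it to a Dirac-type circumference bound for 2-connected graphs: any 2-connected $H$ with $\delta(H) \geq d$ contains a cycle of length at least $\min(|H|, 2d)$. I would derive this from Theorem \ref{Dit} by taking a longest path $P = w_0 w_1 \ldots w_p$ in $H$, observing that $N(w_0) \cup N(w_p) \subseteq V(P)$ by maximality, applying a pigeonhole on $\{i - 1 : w_0 w_i \in E\}$ and $\{j : w_p w_j \in E\}$ inside $\{0, 1, \ldots, p - 1\}$ to produce a cycle through all $p+1$ vertices of $P$, and finally invoking 2-connectivity of $H$ to splice in any vertices outside $V(P)$ via an internally disjoint $V(P)$-path.

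Armed with this bound, the contradiction follows by a block-decomposition analysis. If $G$ is 2-connected, the bound immediately gives a cycle of length at least $\min(n, 2\delta(G)) \geq \min(n, k+2) \geq k + 1$, contradicting the hypothesis. Otherwise $G$ has an endblock $B$; since $\delta(G) \geq 2$, $B$ is 2-connected with $|B| \geq 3$, and every vertex of $B$ other than its cut-vertex has $d_B = d_G > k/2$. A refinement of the circumference bound (exploiting that almost all of $B$ has large degree) then forces $|B| \leq k$. Writing $v$ for the cut-vertex of $B$ and setting $G' = G - (V(B) \setminus \{v\})$, one has $|G'| = n - |B| + 1$ and $G'$ has no cycle of length $> k$; since $|B| \leq k$ gives $e(B) \leq \binom{|B|}{2} \leq k(|B|-1)/2$, we conclude
\[
e(G') = e(G) - e(B) > k(n-1)/2 - k(|B|-1)/2 = k(|G'|-1)/2,
\]
contradicting the inductive hypothesis on $G'$. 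The circumference bound for 2-connected graphs and the refinement forcing $|B| \leq k$ in the endblock case are the technically heaviest parts; the equality characterization then falls out of tracking when each step is tight, which forces $|B| = k$ and $B \cong K_k$ at every endblock and identifies $G$ recursively as a union of $K_k$'s sharing a single vertex.
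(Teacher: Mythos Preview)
The paper does not prove Theorem~\ref{EGc} at all: it is quoted verbatim as a classical result of Erd\H{o}s and Gallai \cite{ErGa59}, so there is no ``paper's own proof'' to compare against. Your proposal is thus an attempt to reprove a cited black box, and should be judged on its own merits.

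The overall architecture (induction on $n$, reduce to connected, peel off a low-degree vertex or else use a circumference bound in the $2$-connected case, and handle cut-vertices via block decomposition) is exactly the classical route. However, two steps are not yet proofs. First, your derivation of the bound ``circumference $\geq \min(|H|,2\delta)$ in a $2$-connected graph'' is incomplete. The crossing/pigeonhole argument on the longest path $P=w_0\cdots w_p$ only closes $P$ into a cycle on all of $V(P)$ when $|N(w_0)|+|N(w_p)|>p$; if $2\delta\le p$ this fails, and you are left with a long \emph{path} but no long \emph{cycle}. The ``splice in outside vertices via $2$-connectivity'' step presupposes you already have the cycle on $V(P)$, so it does not rescue the case $2\delta\le p$. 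The genuine proof of Dirac's circumference bound needs a different device (e.g.\ work with a longest cycle and use the fan lemma, or a Bondy-type argument on two $C$--paths); what you wrote is essentially only Ore's Hamiltonicity criterion. Second, the ``refinement'' for an endblock $B$ is asserted, not argued: the cut-vertex of $B$ need not have degree $>k/2$ inside $B$, so the circumference bound does not apply to $B$ as stated, and you have not said what the refinement is. A cleaner fix is to drop the minimum-degree reduction altogether in the non-$2$-connected case and simply split at a cut-vertex $v$ into $G_1,G_2$ with $|G_1|+|G_2|=n+1$ and $e(G)=e(G_1)+e(G_2)$, then induct on both pieces; this also makes the equality analysis transparent.

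One further remark: your equality analysis would (correctly) yield that every block of $G$ is a $K_k$, which is the true characterisation; the phrasing in the statement (``all sharing a single vertex'') is in fact too restrictive, since a path of $K_k$'s glued at successive cut-vertices also attains equality.
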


The following structural extension of Theorem \ref{EGp} has been established
in \cite{Nik09}.

\begin{lemma}
\label{Nl} Let $k\geq1$ and let the vertices of a graph $G$ be partitioned
into two sets $A$ and $B$. If
\[
2e\left(  A\right)  +e\left(  A,B\right)  >\left(  2k-1\right)  \left\vert
A\right\vert +k\left\vert B\right\vert ,
\]
then there exists a path of order $2k+1$ with both endvertices in $A.$
\end{lemma}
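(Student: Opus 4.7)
The plan is induction on $n=|A|+|B|$ with $k$ fixed; since the hypothesis $2e(A)+e(A,B)>(2k-1)|A|+k|B|$ is additive over components, I may assume $G$ is connected. The base case $k=1$ is immediate: a $P_3$ with both ends in $A$ is just a vertex with two $A$-neighbors, and if none exists then $2e(A)=\sum_{a\in A}|\Gamma(a)\cap A|\le|A|$ and $e(A,B)=\sum_{b\in B}|\Gamma(b)\cap A|\le|B|$, violating the hypothesis. For the inductive step I suppose $G$ is a counterexample and reduce by vertex deletion: if some $v\in A$ has $d_G(v)\le k-1$, then deleting $v$ drops $2e(A)+e(A,B)$ by $|\Gamma(v)\cap A|+d_G(v)\le 2(k-1)=2k-2$ while $(2k-1)|A|+k|B|$ drops by exactly $2k-1$, so the strict inequality survives and induction delivers the path in $G-v\subseteq G$; similarly, if some $v\in B$ has $|\Gamma(v)\cap A|\le k-1$, deleting $v$ drops the LHS by $\le k-1$ and the RHS by $k$. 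Hence I may assume $d_G(v)\ge k$ for every $v\in A$ and $|\Gamma(v)\cap A|\ge k$ for every $v\in B$.

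In this extremal regime I construct the path directly. Take a longest path $P=v_0v_1\cdots v_\ell$ in $G$; by the degree conditions every neighbor of $v_0$ (respectively $v_\ell$) sits on $P$, so $\ell\ge\max(d_G(v_0),d_G(v_\ell))\ge k$. Two tasks remain: (i) move both endpoints of $P$ into $A$, and (ii) push $\ell$ up to $2k$. For (i), if $v_\ell\in B$ then $v_\ell$ has at least $k$ $A$-neighbors on $P$, and a P\'osa rotation $v_0\cdots v_j v_\ell v_{\ell-1}\cdots v_{j+1}$ at a suitable $A$-neighbor $v_j$ preserves the length while moving the right endpoint to $v_{j+1}$; iterating such rotations, with the $k$-bound at every step guaranteeing enough $A$-anchored choices, eventually lands both endpoints in $A$. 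For (ii), if $\ell<2k$ then $d_G(v_0)+d_G(v_\ell)\ge 2k>\ell$, and a classical pigeonhole on the index sets of $\Gamma(v_0)$ and $\Gamma(v_\ell)$ produces a Hamilton cycle on $V(P)$; connectedness then forces $V(P)=V(G)$, so $n=\ell+1\le 2k$, and the trivial bound $2e(A)+e(A,B)\le|A|(n-1)$ combined with the hypothesis yields $|A|(n-2k)>k|B|\ge 0$ while $n-2k\le 0$, a numerical contradiction.

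The main obstacle is step (i): a single rotation can move one endpoint into $A$ only to expel the other, so the rotations must be orchestrated carefully. The assumption $|\Gamma(v)\cap A|\ge k$ for every $v\in B$---rather than merely $d_G(v)\ge k$---is precisely what guarantees enough $A$-anchored rotation choices at each stage to stabilize both endpoints in $A$. The deletion phase and step (ii) are essentially routine bookkeeping.
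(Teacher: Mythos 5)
The paper does not prove this lemma itself; it is cited from \cite{Nik09}, so there is no in-paper proof to compare against. Evaluating your argument on its own terms: the framework (reduction to connected $G$, deletion of low-degree vertices while preserving the strict inequality, then a longest-path argument in the extremal regime) is sensible, and the deletion arithmetic is correct --- deleting $v\in A$ with $d_G(v)\le k-1$ drops the left side by $2|\Gamma(v)\cap A|+|\Gamma(v)\cap B| = |\Gamma(v)\cap A| + d_G(v)\le 2k-2 < 2k-1$, and deleting $v\in B$ with $|\Gamma(v)\cap A|\le k-1$ drops it by $\le k-1<k$, so the hypothesis survives in both cases. Your step (ii) is also correct: the Dirac pigeonhole on a longest path of length $\ell<2k$ yields a spanning cycle of $V(P)$, connectedness forces $V(P)=V(G)$, and $\sum_{v\in A}d(v)=2e(A)+e(A,B)\le |A|(n-1)$ together with the hypothesis gives $|A|(n-2k)>k|B|\ge 0$, contradicting $n\le 2k$. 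Incidentally, this numerical contradiction for $n\le 2k$ is the actual base case of your induction on $n$; the separate ``base case $k=1$'' paragraph does not belong to an induction with $k$ fixed and is superfluous.

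The proof nevertheless has two genuine gaps, both in step (i). First, the rotation argument is not carried out. You concede that a single P\'osa rotation can move one endpoint into $A$ while expelling the other, and you assert that the condition $|\Gamma(v)\cap A|\ge k$ for $v\in B$ ``guarantees enough $A$-anchored rotation choices at each stage to stabilize both endpoints in $A$,'' but you give no mechanism for this: rotating $v_0\cdots v_jv_\ell v_{\ell-1}\cdots v_{j+1}$ at an $A$-neighbor $v_j$ of $v_\ell$ moves the right endpoint to $v_{j+1}$, and nothing you have assumed ensures $v_{j+1}\in A$; there is no potential function or termination argument, and it is not at all clear that the process cannot cycle among $B$-endpoints forever. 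Second, even granting (i) and (ii), you obtain a path of order $\ell+1\ge 2k+1$ with both ends in $A$, but the lemma demands a path of order \emph{exactly} $2k+1$ with both ends in $A$ (this is what is needed in the application, where closing the path through the dominating vertex $w$ must produce $C_{2k+2}$, not merely some longer cycle). Having both ends in $A$ is not a monotone property under taking subpaths, so if $\ell>2k$ you cannot simply truncate: the subpath $v_0\cdots v_{2k}$ may have $v_{2k}\in B$. This reduction from ``long path with $A$-ends'' to ``$P_{2k+1}$ with $A$-ends'' is the crux and is not addressed.
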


To state the next result define the graph $L_{t,k}$ by $L_{t,k}:=K_{1}\vee
tK_{k},$ i.e., $L_{t,k}$ consists of $t$ complete graphs $K_{k+1},$ all
sharing a single common vertex. In \cite{NiYu13}, the following stability
result has been proved.

\begin{theorem}
\label{AS+}Let $k\geq2,$ $n\geq2k+3,$ and $G$ be a graph of order $n$ with
$\delta\left(  G\right)  \geq k.$ If $G$ is connected, then $P_{2k+3}\subset
G,$ unless one of the following holds:

\emph{(i)} $G\subset S_{n,k}^{+};$

\emph{(ii)} $n=tk+1$ and $G=L_{t,k};$

\emph{(iii)} $n=tk+2$ and $G\subset K_{1}\vee(\left(  t-1\right)  K_{k}\cup
K_{k+1});$

\emph{(iv) }$n=\left(  s+t\right)  k+2$ and $G\ $is obtained by joining the
centers of two disjoint graphs $L_{s,k}$ and $L_{t,k}$.
\end{theorem}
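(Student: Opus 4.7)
The plan is to assume $G$ is connected with $\delta(G) \geq k$ and contains no $P_{2k+3}$, and then show $G$ falls into one of the four exceptional families. The main tool will be the rotation-extension method applied to a longest path, combined with the edge-count bound of Theorem~\ref{EGp} and the partitioned bound of Lemma~\ref{Nl}.

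I would let $P = v_0 v_1 \cdots v_p$ be a longest path in $G$. Since $G$ contains no $P_{2k+3}$, we have $p \leq 2k+1$, and since $|V(P)| = p+1 \leq 2k+2 < n$, there is always a vertex $w$ outside $V(P)$. Because $P$ is longest, $\Gamma(v_0) \cup \Gamma(v_p) \subseteq V(P)$, so $v_0$ and $v_p$ each have at least $k$ neighbors among $v_1, \ldots, v_{p-1}$. Applying P\'osa rotation to $v_0$ --- whenever $v_0 v_i \in E(G)$, the path $v_{i-1} v_{i-2} \cdots v_0 v_i \cdots v_p$ is also longest, so $v_{i-1}$ becomes a valid endpoint --- I would build a set $S$ of endpoints, each of which inherits the property that all its neighbors lie in $V(P)$. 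Using the connectedness-forced edge from $w$ to $V(P)$ together with the structure of $S$, I would either produce a $P_{2k+3}$ (a contradiction) or read off strong adjacency constraints on $V(G) \setminus V(P)$.

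Next I would split into cases on the length $p$. When $p = 2k+1$, the $2k+2$ vertices of $V(P)$ together with the outside vertex $w$ cannot be connected arbitrarily without creating the forbidden path; the rotation set $S$ and the minimum degree condition would isolate a set $U$ of $k$ \emph{hub} vertices such that every vertex outside $U$ has neighborhood exactly $U$, with at most one additional edge off $U$, delivering case (i). When $p \leq 2k$, the path $P$ is shorter, and an Erd\H{o}s--Gallai style extension together with $\delta(G) \geq k$ forces $G$ to decompose as a ``book'' of $K_{k+1}$-cliques sharing one or two common vertices; an arithmetic condition on $n$ modulo $k$ combined with the rotation data would then identify which of (ii), (iii), or (iv) occurs.

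The main obstacle I anticipate is the fine-grained structural analysis needed to separate the four exceptional configurations; in particular distinguishing (iii) from (iv) --- both of which force $n \equiv 2 \pmod{k}$ --- will require carefully tracking whether the block decomposition has a single central vertex or two adjacent centers. This distinction sits at the heart of the rotation analysis and is what forces the proof to be genuinely more intricate than the bare edge-count argument supplied by Theorem~\ref{EGp}.
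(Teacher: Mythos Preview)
The paper does not prove Theorem~\ref{AS+}; it is quoted verbatim from \cite{NiYu13} as a supporting result (note the sentence ``In \cite{NiYu13}, the following stability result has been proved'' immediately preceding the statement). There is therefore no in-paper proof to compare your proposal against.

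As for the proposal on its own merits: taking a longest path $P$, applying P\'osa rotation at the endpoints, and then case-splitting on $|V(P)|$ is indeed the standard skeleton for stability results of this type and is very likely the route taken in \cite{NiYu13}. But what you have written is an outline, not a proof. All of the real content sits in the parts you wave at: showing that when $p=2k+1$ the rotation set together with $\delta(G)\ge k$ forces a common neighborhood $U$ of size exactly $k$ with at most one extra edge off $U$ (this is not automatic from rotation alone and needs a careful counting/pigeonhole argument on the endpoint set), and carrying out the block decomposition for $p\le 2k$ that actually identifies the $K_{k+1}$-book structure and separates (ii), (iii), (iv). Your remark that distinguishing (iii) from (iv) is the delicate point is correct, but the proposal does not indicate how you would do it. So the strategy is sound, but as written it cannot be assessed as a correct proof---only as a reasonable plan whose hard steps remain to be executed.
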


We shall need, in fact, a particular corollary of Theorem \ref{AS+}, which is
easy to check directly.

\begin{corollary}
\label{cAS+}Let $k\geq2,$ $n\geq2k+3,$ and let $G$ be a connected $P_{2k+3}%
$-free graph of order $n$ with $\delta\left(  G\right)  \geq k.$ Then
$e\left(  G\right)  \leq\left(  k+1\right)  n/2,$ unless $G\subset S_{n,k}%
^{+}$ and $e\left(  G\right)  \leq kn.$
\end{corollary}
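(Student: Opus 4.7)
My plan is to apply Theorem~\ref{AS+} directly and verify the claimed edge bound by a routine case check on its four structural conclusions.

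Since $G$ is connected, $P_{2k+3}$-free, and satisfies $\delta(G)\ge k$, Theorem~\ref{AS+} forces one of (i)--(iv) to hold. Case~(i) is precisely the exceptional clause of the corollary: for $G\subset S_{n,k}^{+}$ I will note that
\[
e(G)\le e(S_{n,k}^{+})=k(n-k)+\binom{k}{2}+1,
\]
and a short check shows this is at most $kn$ for $k\ge 2$ and $n\ge 2k+3$.

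In each of the remaining cases I will show $e(G)\le(k+1)n/2$ by a direct count. In case~(ii), $G=L_{t,k}$ with $n=tk+1$, giving $e(G)=t\binom{k+1}{2}=(k+1)(n-1)/2$. In case~(iii), $G\subset K_{1}\vee((t-1)K_{k}\cup K_{k+1})$ with $n=tk+2$, and a short simplification of $(n-1)+(t-1)\binom{k}{2}+\binom{k+1}{2}$ yields exactly $(k+1)n/2$; this is the only subcase where the bound is tight. In case~(iv), $n=(s+t)k+2$ and $e(G)=(s+t)\binom{k+1}{2}+1$, while $(k+1)n/2=(s+t)k(k+1)/2+(k+1)$, leaving a slack of $k\ge 2$.

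There is no substantive obstacle: once Theorem~\ref{AS+} is in hand, the corollary reduces entirely to arithmetic bookkeeping. The step requiring the most care is the equality case in~(iii), which also confirms that the bound $(k+1)n/2$ cannot be improved outside of case~(i); note in particular that in case~(i) the bound $(k+1)n/2$ genuinely fails, since $e(S_{n,k}^{+})>(k+1)n/2$ for all $n>k+2$, which is why the separate exceptional clause is needed.
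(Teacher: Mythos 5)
Your proof is correct and is exactly the routine verification the authors had in mind when they wrote that Corollary~\ref{cAS+} "is easy to check directly" from Theorem~\ref{AS+}; the paper omits the proof, and your case-by-case count (with $e(S_{n,k}^+)=kn-\binom{k+1}{2}+1\le kn$ in case~(i), $e=(k+1)(n-1)/2$ in~(ii), equality $(k+1)n/2$ in~(iii), and slack $k$ in~(iv)) fills that gap accurately, including the correct observation that~(iii) is where the bound is tight and that the exceptional clause is genuinely needed because $e(S_{n,k}^+)>(k+1)n/2$ for $n>k+2$.
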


Another statement that we shall need is a variant of Theorem \ref{AS+} for the
case $k=1;$ we omit its easy proof.

\begin{lemma}
\label{AS+1}If $G$ is a connected graph of order $n\geq5.$ If $G$ contains no
$P_{5}$, then one of the following holds:

\emph{(i)} $G\subset S_{n,1}^{+};$

\emph{(ii) }$G\ $is obtained by joining the centers of two disjoint stars.
\end{lemma}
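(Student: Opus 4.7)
The plan is to classify connected $P_5$-free graphs of order $n \geq 5$ by examining a longest path $P$ in $G$. Since $G$ contains no $P_5$, we have $|V(P)| \leq 4$, and I would split into two cases.

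If $|V(P)| \leq 3$, then because $G$ is connected and $n \geq 5$, $P$ must have exactly three vertices, say $a, m, b$ in order. Any vertex outside $P$ adjacent to $a$ or $b$ would extend $P$ to a $P_4$, so every external vertex is adjacent only to $m$; moreover $a$ and $b$ must be non-adjacent, since otherwise $u m a b$ would be a $P_4$ for any external $u$. Hence $G$ is a star $K_{1,n-1}$ and $G \subset S_{n,1}^+$, so (i) holds.

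If $|V(P)| = 4$, write $P = v_1 v_2 v_3 v_4$. Since $P$ is longest, no external vertex is adjacent to $v_1$ or $v_4$. No external vertex can be adjacent to both $v_2$ and $v_3$, since otherwise $v_1 v_2 u v_3 v_4$ would be a $P_5$. Any edge between two external vertices $u, u'$ with $u' \sim v_2$ yields the $P_5$ $u u' v_2 v_3 v_4$, and symmetrically on the $v_3$ side. Combining these observations with the connectivity of $G$, I would conclude that every external vertex is adjacent to exactly one of $v_2, v_3$, and that the sets $A := N(v_2) \setminus V(P)$ and $B := N(v_3) \setminus V(P)$ are independent with no edges between them. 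Next I would rule out the chords of $P$: the chord $v_1 v_4$ creates the $P_5$ $u v_2 v_1 v_4 v_3$ whenever $A \cup B \neq \emptyset$ (guaranteed by $n \geq 5$); the chord $v_1 v_3$ creates the $P_5$ $u v_2 v_1 v_3 v_4$ whenever $A \neq \emptyset$; and symmetrically for $v_2 v_4$.

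It then remains to assemble these facts. If both $A$ and $B$ are nonempty, every chord is forbidden and the edge set of $G$ is exactly $\{v_1 v_2, v_2 v_3, v_3 v_4\} \cup \{v_2 a : a \in A\} \cup \{v_3 b : b \in B\}$, so $G$ is a pair of stars, centered at $v_2$ and $v_3$, joined by the edge $v_2 v_3$, giving (ii). If instead $B = \emptyset$ (necessarily $A \neq \emptyset$ by $n \geq 5$), then every edge of $G$ is incident to $v_2$ except possibly $v_3 v_4$, so $G \subset S_{n,1}^+$ with center $v_2$ and extra edge $v_3 v_4$, giving (i); the case $A = \emptyset$ is symmetric. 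There is no substantial obstacle; the only work is the systematic enumeration of the $P_5$ patterns formed by $P$ together with its external attachments.
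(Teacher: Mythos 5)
The paper explicitly omits its proof of this lemma, so there is nothing to compare against; your task is simply to produce a correct argument, and the longest-path case analysis you give is the natural one. I checked it line by line and it is sound. In the $|V(P)|=4$ case, the step ``combining these observations with the connectivity of $G$, I would conclude that every external vertex is adjacent to exactly one of $v_2,v_3$'' is stated rather quickly: to justify that every external vertex is adjacent to \emph{some} $v_i$ at all, one should take a shortest path $Q$ from an alleged non-neighbour $u$ to $V(P)$ and observe that its last two vertices before hitting $V(P)$, together with three consecutive vertices of $P$, form a $P_5$ (or extend $P$ if $Q$ lands on $v_1$ or $v_4$). Once that is filled in, the rest of your chord elimination and the final assembly into the broom $S_{n,1}^+$ (when $A$ or $B$ is empty, including the possibly surviving chord $v_2v_4$) or the double star (when both are nonempty) is complete and correct.
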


We finish this subsection with two known upper bounds on $q\left(  G\right)
.$ The first one can be traced back to Merris \cite{Mer98}, while the case of
equality has been established in \cite{FeYu09}.

\begin{theorem}
\label{tM}For every graph $G,$
\begin{equation}
q\left(  G\right)  \leq\max\left\{  d\left(  u\right)  +\frac{1}{d\left(
u\right)  }\sum_{v\in\Gamma\left(  u\right)  }d\left(  v\right)  :u\in
V\left(  G\right)  \right\}  . \label{Mb}%
\end{equation}
If $G$ is connected, equality holds if and only if $G$ is regular or
semiregular bipartite.
\end{theorem}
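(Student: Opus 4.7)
The plan is to establish the inequality via a weighted Perron--Frobenius comparison applied to the degree vector $d=(d(u))_{u\in V(G)}$, and then pin down the equality case by analyzing the eigenvalue equation component by component.

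One may assume $G$ has no isolated vertices (they contribute only zero eigenvalues and so affect neither side). Since $Q=D+A$, one computes $(Qd)_u = d(u)^2 + \sum_{v\in\Gamma(u)} d(v)$, so $(Qd)_u/d(u)$ is precisely the expression whose maximum bounds $q(G)$ in (\ref{Mb}). Call this maximum $c$, so that $Qd\le c\,d$ componentwise. Let $x$ be a Perron eigenvector of the symmetric nonnegative matrix $Q$, so $x\ge 0$ and $x^T Q = q(G)\,x^T$. Multiplying $Qd\le c\,d$ on the left by $x^T$ yields $q(G)\,x^T d \le c\,x^T d$; since $x^T d>0$ (as $d>0$ and $x$ is a nonzero nonnegative vector), this gives $q(G)\le c$, proving (\ref{Mb}).

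Now suppose $G$ is connected, so $Q$ is irreducible and $x>0$, and assume $q(G)=c$. Then $x^T(c\,d-Qd)=0$ together with $x>0$ and $c\,d-Qd\ge 0$ forces $Qd=q(G)\,d$, i.e.\ the degree vector is itself a principal eigenvector. Componentwise this reads
\begin{equation*}
\sum_{v\in\Gamma(u)} d(v) = (q-d(u))\,d(u) \quad\text{for every } u,
\end{equation*}
where $q=q(G)$. Let $\delta$ and $\Delta$ denote the minimum and maximum degrees. At a vertex of degree $\delta$ the left side has $\delta$ terms each at most $\Delta$, forcing $q\le \delta+\Delta$; at a vertex of degree $\Delta$ the symmetric bound gives $q\ge \delta+\Delta$. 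Hence $q=\delta+\Delta$ and both estimates are tight, so every neighbor of a vertex of degree $\delta$ has degree $\Delta$, and every neighbor of a vertex of degree $\Delta$ has degree $\delta$.

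It remains to exclude intermediate degrees. If $u$ satisfied $\delta<d(u)<\Delta$, then a neighbor of $u$ of degree $\delta$ would force $d(u)=\Delta$, and a neighbor of degree $\Delta$ would force $d(u)=\delta$; both are impossible, so every neighbor of $u$ also lies in the intermediate set $I=\{w:\delta<d(w)<\Delta\}$. Thus $I$ is a union of connected components of $G$, and since $G$ is connected and contains a vertex of degree $\delta\notin I$, we get $I=\emptyset$. Every edge therefore joins a vertex of degree $\delta$ to one of degree $\Delta$, making $G$ regular (when $\delta=\Delta$) or semiregular bipartite (when $\delta<\Delta$). The converse is a direct check: for an $r$-regular graph $q(G)=2r = r + r^2/r$, and for a semiregular bipartite graph with part degrees $r_1,r_2$ one has $q(G)=r_1+r_2 = r_i + (r_i r_j)/r_i$. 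The main technical point is the intermediate-degree step: connectivity must be combined with the extreme-degree rigidity to prevent $G$ from splitting into multiple degree classes.
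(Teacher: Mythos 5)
The paper never proves Theorem \ref{tM}; it cites Merris \cite{Mer98} for the bound and Feng--Yu \cite{FeYu09} for the equality case, so there is no in-paper proof to compare against. Your argument is correct and self-contained, and it follows what is essentially the standard route. For the inequality you observe that $(Qd)_u/d(u)$ is exactly the bracketed expression, so $Qd\le c\,d$ for $c$ the asserted maximum, and pairing against a nonnegative Perron eigenvector $x$ of $Q$ gives $q\,x^T d\le c\,x^T d$ with $x^T d>0$; this is equivalent to the classical argument that bounds $q=\rho(D^{-1}QD)$ by the maximum row sum of the nonnegative matrix $D^{-1}QD$. The equality analysis is where the real content lies, and you handle it cleanly: irreducibility gives $x>0$ strictly, whence $Qd=qd$ componentwise; evaluating this relation at a minimum-degree vertex yields $q\le\delta+\Delta$ and at a maximum-degree vertex yields $q\ge\delta+\Delta$, with tightness forcing the neighbor-degree rigidity; the "intermediate degree set is closed under adjacency" step combined with connectivity is exactly the right way to exclude more than two degree classes, and the converse computations are correct ($q=2r$ for $r$-regular, $q=r_1+r_2$ for semiregular bipartite, since $d$ is a positive eigenvector with that eigenvalue). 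Two minor points you may want to flag explicitly: the theorem implicitly requires $G$ to have at least one edge, since the bracketed expression involves $1/d(u)$; and the irreducibility of $Q$ for connected $G$ deserves a one-line remark that the off-diagonal support of $Q$ coincides with that of $A$.
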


Finally, let us mention the following bound, due to Das \cite{Das04}.

\begin{theorem}
\label{tD}If $G$ is a graph with $n$ vertices and $m$ edges, then
\begin{equation}
q\left(  G\right)  \leq\frac{2m}{n-1}+n-2, \label{Db}%
\end{equation}
with equality holding if and only if $G$ is either complete, or is a star, or
is a complete graph with one isolated vertex.
\end{theorem}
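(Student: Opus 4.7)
The plan is to exploit the matrix identity $Q(G)+Q(\bar G)=(n-2)I+J$, where $J$ is the all-ones matrix; this follows directly from $D(G)+D(\bar G)=(n-1)I$ and $A(G)+A(\bar G)=J-I$. Combined with $Q(\bar G)\succeq 0$, the identity shows that for every vector $z\perp\mathbf 1$ of unit norm one has $z^{T}Q(G)z\leq n-2$, and a short interlacing argument then yields that $Q(G)-(n-2)I$ has at most one positive eigenvalue, namely $q(G)-(n-2)$ whenever positive. This already reduces the theorem to the case $q(G)>n-2$.

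Let $x$ be a unit Perron eigenvector of $Q(G)$, set $s=x^{T}\mathbf 1$ and $t=\sum_{v}d(v)x_{v}$. Applying the complement identity to $x$ gives
\[
q(G)=(n-2)+s^{2}-x^{T}Q(\bar G)x,
\]
while summing the eigenvalue equation $qx_{v}=d(v)x_{v}+\sum_{u\sim v}x_{u}$ over $v$ yields $qs=2t$. To make the identity effective I would lower-bound $x^{T}Q(\bar G)x$ via Cauchy--Schwarz on the signless edge expansion: when $\bar m:=\binom{n}{2}-m>0$,
\[
x^{T}Q(\bar G)x=\sum_{uv\in E(\bar G)}(x_{u}+x_{v})^{2}\geq\frac{\bigl(\sum_{uv\in E(\bar G)}(x_{u}+x_{v})\bigr)^{2}}{\bar m}=\frac{\bigl((n-1)s-t\bigr)^{2}}{\bar m}=\frac{s^{2}(2n-2-q)^{2}}{4\bar m},
\]
using $qs=2t$ in the last step. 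Substituting this lower bound back yields an algebraic inequality relating $q$, $s$, $\bar m$ and $n$; together with the relation $2\bar m=n(n-1)-2m$ and a careful use of the Perron equation to control $s^{2}$, this rearranges into the desired Das bound $q(G)-(n-2)\leq 2m/(n-1)$. The degenerate case $\bar m=0$, corresponding to $G=K_{n}$, is handled separately by direct computation, giving $q(K_{n})=2(n-1)=2m/(n-1)+n-2$.

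The main obstacle is the characterization of equality. The Cauchy--Schwarz step is tight precisely when all the quantities $x_{u}+x_{v}$ with $uv\in E(\bar G)$ coincide, and, combined with the Perron equation, this forces $x$ to assume at most two distinct positive values and $\bar G$ to be ``uniform'' on its edges. A short case analysis, branching on whether $\bar G$ has isolated vertices (equivalently whether $G$ has a dominating vertex) and whether $G$ is connected, narrows the equality cases to $K_{n}$, $K_{1,n-1}$, and $K_{n-1}\cup K_{1}$; each is then verified to attain equality by explicitly computing its Perron vector. A subsidiary technicality is that when $G$ is disconnected the Perron--Frobenius argument must be applied on the unique component achieving the maximum of $q$ on components, but this only strengthens the strict inequality in the non-extremal cases.
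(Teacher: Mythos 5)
First, a note on context: the paper does not prove Theorem~\ref{tD} at all; it is quoted from Das~\cite{Das04}. So there is no ``paper proof'' to compare against, and your proposal is judged on its own merits.

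Your setup is clean and the computations up to the Cauchy--Schwarz step are all correct: the identity $Q(G)+Q(\bar G)=(n-2)I+J$, the consequence $q_2(G)\le n-2$, the relations $q=(n-2)+s^2-x^{T}Q(\bar G)x$ and $qs=2t$, and the lower bound $x^{T}Q(\bar G)x\ge s^2(2n-2-q)^2/(4\bar m)$. The gap is in the phrase ``a careful use of the Perron equation to control $s^2$\ldots rearranges into the desired Das bound.'' Writing $\alpha=q-(n-2)$, your chain gives $\alpha\le s^2\bigl(1-(n-\alpha)^2/(4\bar m)\bigr)$, in which the right-hand side is \emph{increasing} both in $s^2$ and (for $\alpha<n$) in $\alpha$. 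The only bound on $s^2$ you have in hand, $s^2\le n$, therefore does not close the argument: substituting it and simplifying yields $n-\alpha\le 4\bar m/n$, i.e.\ $q\ge 4m/n$, a true but useless \emph{lower} bound on $q$. Concretely, for the star $K_{1,n-1}$ your constraint $\alpha\le n\bigl(1-(n-\alpha)^2/(4\bar m)\bigr)$ is satisfied for every $\alpha$ up to $n$, whereas the Das bound requires $\alpha\le 2$; so the constraint is essentially vacuous. What is actually needed to make your scheme work is the sharper bound $s^2\le 4m(n-1)/\bigl((n-1)(n-2)+2m\bigr)$, and the natural way to obtain it is $s=2t/q$, $t\le\bigl(\sum_v d(v)^2\bigr)^{1/2}$ by Cauchy--Schwarz, and then the de Caen-type inequality $\sum_v d(v)^2\le m\bigl(2m/(n-1)+n-2\bigr)$ together with the contradiction hypothesis $q>2m/(n-1)+n-2$. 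That degree-power inequality is itself a nontrivial theorem (indeed, essentially the subject of Das's paper) and you neither state nor prove it, so as written the proposal has a genuine hole. The equality analysis also rests on this unsupplied step, since it is the combination of tightness in \emph{both} Cauchy--Schwarz applications, not just the one you discuss, that pins down the extremal graphs.
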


\section{\label{pf}Proof of Theorem \ref{mt}}

Before going further, we shall make three remarks. First, recall an estimate
of $q\left(  S_{n,k}^{+}\right)  $ given in \cite{FNP13}, where it was shown
that if $k\geq2$ and $n>5k^{2},$ then
\begin{equation}
n+2k-2-\frac{2k\left(  k-1\right)  }{n+2k+2}>q\left(  S_{n,k}^{+}\right)
>n+2k-2-\frac{2k\left(  k-1\right)  }{n+2k-3}. \label{b0}%
\end{equation}

Second, note that if $G$ is a graph with $q(G)\geq q\left(  S_{n,k}%
^{+}\right)  ,$ then $e(G)$ cannot be much smaller than $e\left(  S_{n,k}%
^{+}\right)  .$ Indeed, in view of Das's bound (\ref{Db}) we have
\[
q\left(  S_{n,k}^{+}\right)  \leq q(G)\leq\frac{2e(G)}{n-1}+n-2,
\]
and so
\begin{equation}
e(G)\geq kn-k^{2}+1=e\left(  S_{n,k}^{+}\right)  -\frac{k\left(  k-1\right)
}{2}. \label{b1}%
\end{equation}

Finally, given a vertex $u$ of graph $G,$ note that
\[
\sum_{\left\{  u,v\right\}  \in E\left(  G\right)  }d\left(  v\right)
=2e\left(  \Gamma\left(  u\right)  \right)  +e\left(  \Gamma\left(  u\right)
,V\left(  G\right)  \backslash\Gamma\left(  u\right)  \right)  .
\]
We shall use this equality with no explicit reference.

Our proof of Theorem \ref{mt} is rather long and complicated. To improve the
presentation we have extracted a large segment of it into Lemma \ref{l1} and
Theorem \ref{mt1} below. Since these assertions are subordinate to the proof
of Theorem \ref{mt}, their statements look somewhat technical.

\begin{lemma}
\label{l1}Let $k\geq2,$ $n\geq400k^{2},$ let $G$ be a $C_{2k+2}$-free graph of
order $n$ with%
\[
q\left(  G\right)  \geq q\left(  S_{n,k}^{+}\right)  .
\]
Let $w$ be a dominating vertex of $G,$ suppose that $G_{1},\ldots,G_{p}$ are
the components of $G_{w}$ of order at most $3k^{2},$ and let $H:=G-\cup
_{i=1}^{p}G_{i}.$ Then $\left\vert H\right\vert \geq3n/10$ and \textbf{ }
\[
q\left(  H\right)  >q\left(  S_{\left\vert H\right\vert ,k}^{+}\right)  ,
\]
unless $H=G$ and $q\left(  H\right)  =q\left(  G\right)  =q\left(  S_{n,k}%
^{+}\right)  .$
\end{lemma}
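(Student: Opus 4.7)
The plan is to exploit that the Perron eigenvector of $Q(G)$ is sharply concentrated on $V(H)$: dropping the small components of $G_w$ reduces the vertex set by $|S|:=\sum_{i=1}^{p}|G_i|$ but barely alters the quadratic form. Paired with the asymptotics of $q(S_{m,k}^{+})$ from (\ref{b0}), this forces $q(H)-q(S_{|H|,k}^{+})>0$ by an amount proportional to $|S|$, so that the only way to avoid strict inequality is $|S|=0$, i.e.\ $H=G$.

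Let $x$ be the positive unit Perron eigenvector of $Q(G)$ and put $q=q(G)$. For $v\in G_i\subseteq S$, the eigenvalue equation $q x_v = d_G(v)x_v + x_w + \sum_{u\in \Gamma_{G_i}(v)} x_u$, together with $d_G(v)\le |G_i|\le 3k^2$ and $q\ge q(S_{n,k}^{+})>n+2k-3$ (from (\ref{b0}) and $n\ge 400k^2$), yields the pointwise bound $x_v\le 2x_w/n$, whence $\sum_{v\in S} x_v \le 2|S|x_w/n$ and $\sum_{v\in S} x_v^2 \le 4|S|x_w^2/n^2$. The equation at the dominating vertex reads $(q-n+1)x_w=\sum_{v\ne w} x_v$, so Cauchy--Schwarz gives $\|x\|^2 \ge x_w^2\bigl(1+(q-n+1)^2/(n-1)\bigr)$, and therefore $1-x_w^2/\|x\|^2 \ge (2k-1)^2/n$ up to a negligible correction.

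Taking $y := x|_{V(H)}$ as a test vector for $Q(H)$ and splitting $E(G)$ into $E(H)$, the internal edges of the components $G_i$, and the $|S|$ edges from $w$ to $S$, the identity $q\|x\|^2=\sum_{uv\in E(G)}(x_u+x_v)^2$ rearranges into
\[
q(H)\ge \frac{y^{T}Q(H)y}{\|y\|^2}=\frac{q\|x\|^2-\sum_i \xi_i^{T}Q(G_i)\xi_i-\sum_{s\in S}(x_w+x_s)^2}{\|x\|^2-\sum_{s\in S}x_s^2},
\]
where $\xi_i=x|_{V(G_i)}$. Theorem \ref{tD} bounds $q(G_i)\le 3k^2+2k-3$, which together with the estimates above makes the first subtracted term $O(k^2|S|x_w^2/n^2)$ and the second equal to $|S|x_w^2$ up to a correction of order $|S|x_w^2/n$; a short rearrangement then gives $q(H)\ge q - |S| x_w^2/\|x\|^2 - O(k^2|S|/n^2)$. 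Combined with $q(S_{n,k}^{+})-q(S_{|H|,k}^{+}) \ge |S|-O(k^2/n^2)$, which follows at once from (\ref{b0}), this yields
\[
q(H)-q(S_{|H|,k}^{+})\ge |S|\bigl(1-x_w^2/\|x\|^2\bigr)-O(k^2/n)\ge |S|(2k-1)^2/n - O(k^2/n),
\]
strictly positive as soon as $|S|\ge 1$; the case $|S|=0$ forces $H=G$, so the statement reduces to the equality case.

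The bound $|H|\ge 3n/10$ I would derive separately from edge counts: (\ref{b1}) gives $e(G)\ge kn-k^2+1$, while $G_w$ being $P_{2k+1}$-free (any $P_{2k+1}$ in $G_w$ would close through $w$ to a $C_{2k+2}$) together with Theorem \ref{EGp} bounds $e(G_i)\le (2k-1)|G_i|/2$ and $e(H)\le (2k+1)(|H|-1)/2$; tightening by splitting the components $G_i$ according to size supplies the required linear lower bound on $|H|$. The main technical obstacle lies in the Rayleigh-quotient step: because $x_w^2/\|x\|^2$ is typically very close to $1$, the margin $|S|(1-x_w^2/\|x\|^2)$ is of order only $k^2|S|/n$, so every error term in the edge decomposition and in (\ref{b0}) must be controlled to higher precision than this margin.
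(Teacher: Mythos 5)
Your overall strategy matches the paper's: estimate $q(H)$ by restricting the Perron eigenvector $\mathbf{x}$ of $Q(G)$ to $V(H)$, peel off the contribution of the small components, and exploit the concentration of $\mathbf{x}$ at $w$ via $1-x_w^2\gtrsim 4(k-1)^2/n$. The paper removes the components $G_i$ one at a time (induction on $p$), while you remove them all at once; that change is harmless in principle, but it forces you to track constants simultaneously across all error terms, and this is exactly where your proposal has a genuine gap.

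The margin $|S|(1-x_w^2)$ and the error term coming from the cross products $2x_w\sum_{s\in S}x_s$ are of the \emph{same} order, $|S|/n$, and their relative size is decided by constants that your $O(\cdot)$ notation discards. Concretely, for $s\in S$ the sharp pointwise bound obtained from the eigenequation (and used in the paper) is $x_s\le x_w/(q-2|G_i|+1)\le x_w/(n-6k^2)$, which for $n\ge 400k^2$ is essentially $x_w/n$; you weaken this to $x_s\le 2x_w/n$. With the weaker bound, the cross term becomes $2x_w\sum_s x_s\le 4|S|x_w^2/n$, while the margin is $|S|(1-x_w^2)\ge 4(k-1)^2|S|/(n+4(k-1)^2)$. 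For $k=2$ these are both $\approx 4|S|/n$ and the net is nonpositive, so strict positivity of $q(H)-q(S_{|H|,k}^+)$ does not follow. With the sharp bound $x_s\le x_w/(n-6k^2)$ the cross term is $\approx 2|S|/n$ and the margin $\approx 4(k-1)^2|S|/n$ wins with room to spare (the paper's explicit constant $3/2n$ per removed vertex reflects this). You yourself flag the ``main technical obstacle'' in your last paragraph, but acknowledging the tightness is not the same as closing it; as written, the argument does not prove the lemma for $k=2$ (and the $O(k^2|S|/n^2)$ term from the internal edges of the $G_i$, as well as the error in comparing $q(S_{n,k}^+)$ with $q(S_{|H|,k}^+)$ via (\ref{b0}), also need explicit constants rather than big-$O$). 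The bound $|H|\ge 3n/10$ is likewise left as a sketch (``splitting the components by size''), whereas the paper obtains it as a byproduct of the same iterative estimate combined with Das's bound and Erd\H{o}s--Gallai. To salvage your all-at-once version you would need to replace every $O(\cdot)$ with an explicit constant and in particular tighten the pointwise eigenvector bound to $x_s\le x_w/(n-6k^2)$.
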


\begin{proof}
If $p=0$, the proof is completed, so let us assume that $p\geq1.$ We shall use
induction on $p.$ Let $\left\vert G_{i}\right\vert :=n_{i},$ and
$H_{s}:=G-\cup_{i=1}^{s}G_{i}.$ First we shall prove that $q\left(
H_{1}\right)  >q\left(  S_{n-n_{1},k}^{+}\right)  .$ For short, set
$q:=q\left(  G\right)  $ and $V:=V\left(  G\right)  .$ Letting $\left(
x_{1},\ldots,x_{n}\right)  $ be a positive unit eigenvector to $q,$ from the
eigenequation for $q$ and the vertex $w$ we see that%
\[
\left(  q-n+1\right)  x_{w}=\sum_{i\in V\backslash\left\{  w\right\}  }%
x_{i}\leq\sqrt{\left(  n-1\right)  \left(  1-x_{w}^{2}\right)  }.
\]
Now, in view of $n\geq400k^{2}\geq5k^{2}$ and (\ref{b0}), we see that
\[
q\geq q\left(  S_{n,k}^{+}\right)  >n+2k-3,
\]
and so,
\begin{equation}
x_{w}^{2}\leq\frac{n-1}{\left(  q-n+1\right)  ^{2}+n-1}\leq\frac
{n-1}{n-1+4\left(  k-1\right)  ^{2}}<1-\frac{4\left(  k-1\right)  ^{2}%
}{n+4k^{2}}. \label{in4}%
\end{equation}

Next, letting
\begin{equation}
x=\max\left\{  x_{i}\text{
%TCIMACRO{\TEXTsymbol{\vert} }%
%BeginExpansion
$\vert$
%EndExpansion
}i\in V\left(  G_{1}\right)  \right\}  , \label{b2}%
\end{equation}
we have
\[
qx\leq n_{1}x+\left(  n_{1}-1\right)  x+x_{w},
\]
and so%
\[
x\leq\frac{x_{w}}{q-2n_{1}+1}\leq\frac{x_{w}}{n+2k-2n_{1}-2}\leq\frac{x_{w}%
}{n-6k^{2}}.
\]
Note that%
\begin{align*}
q\left(  H_{1}\right)   &  \geq q\left(  H_{1}\right)  \left(  1-\sum_{i\in
V\left(  G_{1}\right)  }x_{i}^{2}\right)  \geq\sum_{\left\{  i,j\right\}  \in
E\left(  H_{1}\right)  }\left(  x_{i}+x_{j}\right)  ^{2}\\
&  =q\left(  G\right)  -\sum_{\left\{  i,j\right\}  \in E\left(  G_{1}\right)
}\left(  x_{i}+x_{j}\right)  ^{2}-\sum_{i\in V\left(  G_{1}\right)  }\left(
x_{i}+x_{w}\right)  ^{2}.
\end{align*}
On the other hand, using (\ref{b2}) and (\ref{in4}), we see that
\begin{align*}
\sum_{\left\{  i,j\right\}  \in E\left(  G_{1}\right)  }\left(  x_{i}%
+x_{j}\right)  ^{2}+\sum_{i\in V\left(  G_{1}\right)  }\left(  x_{i}%
+x_{w}\right)  ^{2}  &  \leq2n_{1}\left(  n_{1}-1\right)  x^{2}+n_{1}\left(
x+x_{w}\right)  ^{2}\\
&  \leq n_{1}\left(  1+\frac{2}{n-6k^{2}}+\frac{2n_{1}-1}{\left(
n-6k^{2}\right)  ^{2}}\right)  x_{w}^{2}\\
&  \leq n_{1}\left(  1+\frac{21}{10\left(  n-6k^{2}\right)  }\right)  \left(
1-\frac{4\left(  k-1\right)  ^{2}}{n+4k^{2}}\right) \\
&  \leq n_{1}\left(  1-\frac{3}{2n}\right)  .
\end{align*}
Hence, (\ref{b0}) implies that
\begin{align*}
q\left(  H_{1}\right)   &  \geq q\left(  G\right)  -n_{1}+\frac{3n_{1}}%
{2n}\geq n-n_{1}+2k-2-\frac{2k\left(  k-1\right)  }{n+2k-3}+\frac{3n_{1}}%
{2n}\\
&  >n-n_{1}+2k-2-\frac{2k\left(  k-1\right)  }{n-n_{1}+2k+2}>q\left(
S_{n-n_{1},k}^{+}\right)  .
\end{align*}
We can apply this argument repeatedly, as long as the remaining graph $H_{i}$
is of order at least $100k^{2}.$ Assume that there exists some $j$ such that
$\left\vert H_{j-1}\right\vert \geq100k^{2},$ while $\left\vert H_{j}%
\right\vert <100k^{2}.$ Set for short $t:=n_{1}+\cdots+n_{j},$ and note that
\[
q\left(  H_{j}\right)  \geq q\left(  G\right)  -t+\frac{3t}{2n}.
\]
Since $C_{s}\nsubseteq H_{j}$ for any $s\geq2k+2,$ Theorem \ref{EGc} implies
that
\[
e\left(  H_{j}\right)  \leq\left(  k+1/2\right)  \left(  n-t-1\right)
\]
and Das's bound (\ref{Db}) implies that
\[
q\left(  H_{j}\right)  \leq\frac{2e\left(  H_{j}\right)  }{n-t-1}+n-t-2\leq
n-t+2k-1.
\]
In view of
\[
q\left(  H_{j}\right)  \geq q\left(  G\right)  -t+\frac{3t}{2n}>n-t+2k-2-\frac
{2k\left(  k-1\right)  }{n+2k-3}+\frac{3t}{2n},
\]
we have
\[
t<\frac{7}{10}n,
\]
which implies that $\left\vert H\right\vert \geq3n/10>100k^{2},$ and this
contradiction completes the proof of Lemma \ref{l1}.
\end{proof}

Lemma \ref{l1} is crucial for the next theorem, where it will be used to
improve the structure of a graph at the price of a moderate reduction of its order.

\begin{theorem}
\label{mt1}Let $k\geq2,$ $n\geq400k^{2},$ and let $G$ be a graph of order $n$.
If $q\left(  G\right)  \geq q\left(  S_{n,k}^{+}\right)  $ and $\Delta\left(
G\right)  =n-1$, then $C_{2k+2}\subset G,$ unless $G=S_{n,k}^{+}.$
\end{theorem}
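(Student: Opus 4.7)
The plan is to reduce $G$ to a cleaner ``large'' subgraph $H$ via Lemma \ref{l1} and then force $H$ to equal $S_{|H|,k}^+$ by exploiting the $C_{2k+2}$-free condition together with the dominating vertex $w$. So assume $G$ is $C_{2k+2}$-free with dominating vertex $w$ and $q(G)\ge q(S_{n,k}^+)$, and aim at $G=S_{n,k}^+$. Let $G_1,\ldots,G_p$ be the components of $G_w$ of order at most $3k^2$ and set $H:=G-\bigcup_{i=1}^p G_i$. Lemma \ref{l1} yields the dichotomy: either (a) $p=0$, $H=G$ and $q(G)=q(S_{n,k}^+)$; or (b) $q(H)>q(S_{|H|,k}^+)$ with $|H|\ge 3n/10>100k^2$. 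In both cases $w$ remains dominating in $H$ and every component of $H_w$ has more than $3k^2$ vertices.

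The central structural input is that each component $C$ of $H_w$ must be $P_{2k+1}$-free: a path on $2k+1$ vertices inside $C$ would close through $w$ to a $C_{2k+2}$ in $G$. In particular $C$ has no cycle of length exceeding $2k$, so by Theorem \ref{EGc}, $e(C)\le k(|C|-1)$, with equality only when $C$ is a bouquet of $K_{2k}$'s sharing one vertex. Summing gives $e(H_w)\le k(|H|-1)$. Applying Merris's bound (Theorem \ref{tM}) at $w$ yields
\[
q(H)\le |H|+\frac{2e(H_w)}{|H|-1},
\]
which together with $q(H)\ge q(S_{|H|,k}^+)>|H|+2k-3$ from (\ref{b0}) and with $e(H)\ge k|H|-k^2+1$ from (\ref{b1}) pinches $e(H_w)$ into a narrow window around $k(|H|-1)$. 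A stability analysis of the equality case of Theorem \ref{EGc}, applied component by component, forces each $C$ to be either (I) essentially a bouquet of $K_{2k}$'s sharing a single vertex (so $H$ is close to $L_{t,2k}$ after adjoining $w$), or (II) essentially a subgraph of $S_{|C|,k}^+$.

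Case (I) is eliminated by a direct eigenvalue computation on the bouquet graph: using the equitable partition $\{w\}\cup(V(H)\setminus\{w\})$ of $L_{t,2k}$, the resulting $2\times 2$ quotient matrix has largest root strictly below $q(S_{|H|,k}^+)$ in the regime $|H|\ge 100k^2$, via (\ref{b0}). Hence (II) holds and, combined with the edge lower bound (\ref{b1}), forces $H=S_{|H|,k}^+$. Finally, if $p\ge 1$ then some $G_i$ contributes vertices of $G_w$ disconnected from the $K_{k-1}$-core of $H_w$, contradicting the defining structure of $S_{n,k}^+$ (in which every non-clique vertex is joined to all $k$ clique vertices); so $p=0$ and $G=H=S_{n,k}^+$. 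The main obstacle is the stability step: Merris's and Erd\H{o}s--Gallai's bounds both carry additive slack of order $k^2$, so the approximate structural information they yield must be sharpened using the eigenvector estimate (\ref{in4}) (which pins $x_w$ within $O(k^2/n)$ of $1$ and bounds the remaining coordinates by $O(x_w/n)$) plugged into the eigenequations at vertices of $H_w$; this rigidification of each component from ``approximately extremal'' to exactly of type (I) or (II) is by far the longest and most delicate part of the argument.
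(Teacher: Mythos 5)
Your use of Lemma \ref{l1} to reduce to a large subgraph $H$ and the observation that every component of $H_w$ is $P_{2k+1}$-free (hence has no cycle longer than $2k$) both match the paper. But the core of the proof is exactly the step you explicitly defer --- passing from ``each component of $H_w$ is $P_{2k+1}$-free and $e(H_w)$ is close to extremal'' to ``each component is a copy of $S_{m,k-1}^+$'' --- and your plan for it contains genuine gaps. First, Merris's bound (\ref{Mb}) is a maximum over \emph{all} vertices; evaluating the expression only at $w$ does not give an upper bound on $q(H)$ without showing the max is attained at $w$, which you do not do. Second, even granting that, the window you claim is not narrow: Erd\H{o}s--Gallai's path bound gives $e(H_w)\le(k-\tfrac12)(|H|-1)$ while the Das-derived lower bound (\ref{b1}) gives only $e(H_w)\ge(k-1)|H|-k^2+2$, leaving a gap of order $|H|/2$, much too large to force any structure by counting. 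Third, and most important, Theorem \ref{EGc} has no stability version: it characterizes the graphs attaining \emph{exact} equality, and ``nearly tight'' does not yield ``nearly a bouquet.''

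The paper's actual structural engine is Theorem \ref{AS+} (from \cite{NiYu13}), a dedicated stability theorem for $P_{2k+3}$-free graphs with minimum degree at least $k$, which your sketch never invokes. Using it requires first manufacturing a subgraph of $G_w$ with $\delta\ge k-1$ by iteratively deleting low-degree vertices (Claim 2 in the paper) and controlling the number of deletions by comparing the Erd\H{o}s--Gallai upper bound to (\ref{be}); this peeling step is entirely absent from your plan. Theorem \ref{AS+} also produces four extremal families, not your two, and the unwanted ones are eliminated by Corollary \ref{cAS+} and edge counting, not by a quotient-matrix computation on $L_{t,2k}$. The peeled vertices and the residual set then have to be shown attachable only in the $S_{m,k-1}^+$ pattern, which the paper does in Claims 3--5 via eigenvector comparisons and rewiring arguments, and the paper handles $k=2$ separately (Claim 1) because the stability machinery needs $k\ge3$, another point your sketch does not address. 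In short, the proposal correctly identifies the broad reduction but omits the nonspectral stability theorem that actually carries the proof, and the counting and Merris-based substitutes you propose do not close the gap.
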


\begin{proof}
Assume that $C_{2k+2}\nsubseteq G.$ To prove the theorem we need to show that
$G=S_{n,k}^{+}.$ Let $w$ be a dominating vertex of $G.$ Applying Lemma
\ref{l1}, we can find an induced subgraph $H\subset G$ with the following properties:

- $H$ is $C_{2k+2}$-free;

- $w$ is a dominating vertex in $H$;

- $\left\vert H\right\vert \geq3n/10>100k^{2};$

- every component of $H_{w}$ is of order greater than $3k^{2};$

- $q\left(  H\right)  >q\left(  S_{\left\vert H\right\vert ,k}^{+}\right)  ,$
unless $H=G$ and $q\left(  H\right)  =q\left(  G\right)  =q\left(  S_{n,k}%
^{+}\right)  .$

Thus, to complete the proof of Theorem \ref{mt1} it is enough to prove the
following statement:\medskip

\textbf{Theorem A }\emph{Let }$k\geq2,$\emph{ }$n\geq100k^{2},$\emph{ let }%
$G$\emph{ be a }$C_{2k+2}$\emph{-free graph of order }$n,$\emph{ and let }%
$w$\emph{ be a dominating vertex of }$G$\emph{ such that each component of
}$G_{w}$\emph{ is greater than }$3k^{2}$\emph{. If }$q\left(  G\right)  \geq
q\left(  S_{n,k}^{+}\right)  ,$\emph{\ then }$G=S_{n,k}^{+}.\medskip$

We proceed with the proof of Theorem A, keeping the notation for $G$ and $n,$
although now $n>100k^{2}.$ Thus, assume that $G$ and $n$ satisfy the premises
of Theorem A and $q\left(  G\right)  \geq q\left(  S_{n,k}^{+}\right)  .$ We
shall prove that $G=S_{n,k}^{+}.$ For convenience choose $G$ so that it has
maximum $Q$-index among all graphs satisfying the premises of Theorem A. Next
observe that inequality (\ref{b1}) implies that
\[
e(G)\geq kn-k^{2}+1,
\]
and so
\begin{equation}
e(G_{w})=e(G)-n+1\geq\left(  k-1\right)  n-k^{2}+2. \label{be}%
\end{equation}
\medskip

We shall dispose of the case $k=2$ before anything else, as most of our
arguments work for $k\geq3$ and need changes to work for $k=2.$ \medskip

\textbf{Claim 1. }\emph{Theorem A holds for }$k=2.\medskip$

\emph{Proof. }If $k=2,$ then $G_{w}$ consist of components of order greater
than $12.$ Since $\delta\left(  G_{w}\right)  \geq1$ and $P_{5}\nsubseteq
G_{w},$ Lemma \ref{AS+1} implies that the components of $G_{w}$ are of the two
types given in clauses \emph{(i) }and\emph{ (ii)}. If $G_{i}\subset
S_{\left\vert G_{i}\right\vert ,1}^{+}$, we see that $G_{i}=S_{\left\vert
G_{i}\right\vert ,1}^{+},$ as $q\left(  G\right)  $ is maximal. Now assume
that $G_{i}$ is a component of $G_{w}$ consisting of two stars whose centers
are joined by an edge; let $u$ and $v$ be the centers of these stars; let
$u_{1},\ldots,u_{s}$ be the neighbors of $u$ and $v_{1},\ldots,v_{t}$ be the
neighbors of $v.$ Let $\left(  x_{1},\ldots,x_{n}\right)  $ be a unit positive
eigenvector to $q\left(  G\right)  ;$ by symmetry suppose that $x_{u}\geq
x_{v}.$ Remove all edges $\left\{  v_{i},v\right\}  $ and join $v_{1}%
,\ldots,v_{t}$ to $u.$ In this way $G_{i}$ is transformed into a star
$S_{s+t+2,1};$ now make it an $S_{s+t+2,1}^{+}$ by adding an edge to it. A
brief calculation shows that the resulting graph $G^{\prime}$ satisfies
$q\left(  G^{\prime}\right)  >q\left(  G\right)  ,$ which contradicts the
choice of $G$. Hence each component $G_{i}$ of $G_{w}$ satisfies
$G_{i}=S_{\left\vert G_{i}\right\vert ,1}^{+}$.

Finally, if $G_{w}$ has two components $G_{1}=S_{\left\vert G_{1}\right\vert
,1}^{+}$ and $G_{2}=S_{\left\vert G_{2}\right\vert ,1}^{+},$ replace them by a
component $S_{\left\vert G_{1}\right\vert +\left\vert G_{2}\right\vert ,1}%
^{+}$ and write $G^{\prime\prime}$ for the resulting graph. Clearly
$C_{6}\nsubseteq G^{\prime\prime}$ and we shall show that $q\left(
G^{\prime\prime}\right)  >q\left(  G\right)  ,$ which contradicts our choice
of $G$. Let $u\ $and $v$ be the dominating vertices in $G_{1}$ and $G_{2}.$
Let $\mathbf{x}=\left(  x_{1},\ldots,x_{n}\right)  $ be a positive unit
eigenvector to $q\left(  G\right)  .$ By symmetry we may suppose that
$x_{u}\geq x_{v}.$ Now remove all edges of $G_{2}$ and join all vertices of
$G_{2}$ to $u.$ In this way $G_{1}$ and $G_{2}$ are replaced by a single
component $S_{\left\vert G_{1}\right\vert +\left\vert G_{2}\right\vert ,1}%
^{+}.$ For short, let $n_{1}:=\left\vert G_{1}\right\vert ,$ $n_{2}%
:=\left\vert G_{2}\right\vert ,$ and $q:=q\left(  G\right)  .$ Let
$W:=V\left(  S_{n_{2},k-1}^{+}\right)  \backslash\left\{  v\right\}  $ and let
$v^{\prime},v^{\prime\prime}\in W$ be the two exceptional vertices of $G_{2}$
such that $\left\{  v^{\prime},v^{\prime\prime}\right\}  \in E\left(
G\right)  .$ By symmetry, $x_{v^{\prime}}=x_{v^{\prime\prime}}$ and from the
eigenequations for $q$ we see that
\begin{align*}
qx_{v^{\prime}}  &  =3x_{v^{\prime}}+x_{v^{\prime\prime}}+x_{v}+x_{w}%
=4x_{v^{\prime}}+x_{v}+x_{w},\\
qx_{v}  &  =n_{2}x_{v}+x_{v^{\prime}}+x_{v^{\prime\prime}}+x_{w}+\sum_{s\in
W\backslash\left\{  v^{\prime},v^{\prime\prime}\right\}  }x_{s}\\
&  >n_{2}x_{v}+2x_{v^{\prime}}+x_{w}.
\end{align*}
Excluding $x_{w}$ from these relations, after some algebra we see that
\[
\left(  q-n_{2}+1\right)  x_{v}>\left(  q-2\right)  x_{v^{\prime}}%
\]
and so $x_{v^{\prime\prime}}=x_{v^{\prime}}<x_{v}.$ A brief calculation shows
that the resulting graph $G^{\prime}$ satisfies $q\left(  G^{\prime}\right)
>q\left(  G\right)  ,$ which contradicts the choice of $G$. This contradiction
shows that $G_{w}$ has only one component and so $G=S_{n,2}^{+},$ completing
the proof of Claim 1.\medskip

To the end of the proof we shall assume that $k\geq3.\medskip$

\textbf{Claim 2. }\emph{There exists an induced subgraph }$H$\emph{ of }%
$G_{w}$\emph{ such that }$\delta\left(  H\right)  \geq k-1$\emph{ and
}$\left\vert H\right\vert \geq n-k^{2}+k$\emph{.}\medskip

\emph{Proof. }Define a sequence of graphs, $F_{0}\supset F_{1}\supset
\cdots\supset F_{r}\supset\cdots$ using the following procedure:

$F_{0}:=G_{w};$

$i:=0$;

\textbf{while} $\delta(F_{i})<k-1$ \textbf{do begin}

\qquad select a vertex $v\in V(F_{i})$ with $d(v)=\delta(F_{i});$

\qquad$F_{i+1}:=F_{i}-v;$

$\qquad i:=i+1;$

\textbf{end.}

Note that for each $r=0,1,\ldots,$ we have $\left\vert F_{r}\right\vert
=n-r-1$ and $P_{2k+1}\nsubseteq F_{r};$ thus Theorem \ref{EGp} implies that
\[
e\left(  F_{r}\right)  \leq\left(  k-\frac{1}{2}\right)  \left(  n-r-1\right)
.
\]

On the other hand, in view of (\ref{be}), we find that
\begin{align}
e\left(  F_{r}\right)   &  =e\left(  G_{w}\right)  -\sum_{i=0}^{r-1}%
\delta(F_{i})\geq e\left(  G_{w}\right)  -r\left(  k-2\right) \nonumber\\
&  \geq\left(  k-1\right)  n-k^{2}+2-r\left(  k-2\right)  . \label{lb}%
\end{align}
Hence,
\[
\left(  k-1\right)  n-k^{2}+2-r\left(  k-2\right)  \leq\left(  k-\frac{1}%
{2}\right)  \left(  n-r-1\right)  ,
\]
and after some algebra we find that%
\[
3r\leq n+2k^{2}-2k-3<2n.
\]
that is to say, the procedure stops before $i\geq2n/3$. Next, with a more
involved argument, we shall show that the procedure stops before $i>k^{2}-k-1$.

Let $H=F_{r},$ where $r$ is the last value of the variable $i.$ Let $H_{i}$ be
a component of $H$ and set $n_{i}:=\left\vert H_{i}\right\vert .$ We claim
that $e\left(  H_{i}\right)  \leq\left(  k-1\right)  n_{i}.$

Indeed if $n_{i}\leq2k-1,$ then
\[
e\left(  H_{i}\right)  \leq\frac{n_{i}\left(  n_{i}-1\right)  }{2}\leq\left(
k-1\right)  n_{i}.
\]

If $n_{i}=2k$ and $H_{i}$ is Hamiltonian, then $H_{i}$ is a component of
$G_{w}$ as otherwise $P_{2k+1}\subset G_{w}.$ But all components of $G_{w}$
are of order at least $3k^{2}>2k,$ so $H_{i}$ is not Hamiltonian. In this
case, Ore's theorem \cite{Ore60} implies that
\[
e\left(  H_{i}\right)  \leq\frac{\left(  n_{i}-1\right)  \left(
n_{i}-2\right)  }{2}+1\leq\left(  k-1\right)  n_{i}.
\]

If $n\geq2k+1,$ in view of $P_{2k+1}\nsubseteq$ $H_{i}$ and $\delta\left(
H_{i}\right)  \geq k-1,$ it follows that $H_{i}$ satisfies one of the clauses
of Theorem \ref{AS+}, and so Corollary \ref{cAS+} implies that $e\left(
H_{i}\right)  \leq\left(  k-1\right)  n_{i}.$ Summing over all components of
$H,$ we find that%
\[
e\left(  H\right)  \leq\left(  k-1\right)  \left(  n-r-1\right)  .
\]
On the other hand, in view of (\ref{lb}) we find that
\[
\left(  k-1\right)  n-k^{2}+2-r\left(  k-2\right)  \leq\left(  k-1\right)
\left(  n-r-1\right)  ,
\]
and so, $r\leq k^{2}-k-1.$ Therefore, $H$ satisfies the requirements of Claim
2, which is thus proved.\medskip

Let $H^{\prime}$ be the subgraph of $G_{w}$ induced by the vertex set
$V\left(  G_{w}\right)  \backslash V\left(  H\right)  ,$ which may be empty.
Let $H_{1},\ldots,H_{p}$ be the components of $H$ and let $n_{1},\ldots,n_{p}$
be their orders.\medskip

\textbf{Claim 3. }\emph{Each component }$H_{i}$\emph{ of\ }$H$\emph{ satisfies
}$H_{i}\subset S_{\left\vert H_{i}\right\vert ,k-1}^{+}.\medskip$

\emph{Proof. }First note that each component of $G_{w}$ contains at most one
component of $H.$ Indeed, since for each component $H_{i}$ of $H$, we have
$\delta\left(  H_{i}\right)  \geq k-1\geq2,$ Dirac's Theorem \ref{Dit} implies
that $C_{l}\subset H_{i},$ for some $l\geq k;$ hence each component of $G_{w}$
contains at most one component of $H,$ as otherwise $P_{2k+1}\subset G_{w}.$

Further, if $H_{i}$ is a component of $H$ and $G_{i}$ is a component of
$G_{w}$ containing $H_{i},$ there are at most $k^{2}-k-1$ vertices in
$V\left(  G_{i}\right)  \backslash V\left(  H_{i}\right)  ;$ since $\left\vert
G_{i}\right\vert >3k^{2},$ the order of $H_{i}$ satisfies
\[
\left\vert H_{i}\right\vert >3k^{2}-k^{2}+k+1>2k^{2}.
\]

Assume for a contradiction that $H_{i}$ is a\ component of $H$ such that
$H_{i}\nsubseteq S_{n_{i},k-1}^{+}.$ Note that $n_{i}>2k^{2}>2k+1,$
$P_{2k+1}\nsubseteq H_{i},$ and $\delta\left(  H_{i}\right)  \geq k-1.$ Using
Theorem \ref{AS+}, we see that $H_{i}$ is one of the graphs from clauses
\emph{(ii), (iii) }or \emph{(iv). }Now Corollary \ref{cAS+} implies that
\[
e\left(  H_{i}\right)  \leq\frac{kn_{i}}{2}.
\]
Since $H$ is $P_{2k+1}$-free, we have
\[
e\left(  H\right)  =e\left(  H_{i}\right)  +e\left(  V\left(  H\right)
\backslash V\left(  H_{i}\right)  \right)  \leq\frac{kn_{i}}{2}+(k-1)\left(
n-1-\left\vert H^{\prime}\right\vert -n_{i}\right)  .
\]
and from (\ref{lb}) we know that%
\[
e\left(  H\right)  \geq\left(  k-1\right)  n-k^{2}+2-\left(  k-2\right)
\left\vert H^{\prime}\right\vert .
\]
After some algebra we see that%
\[
k^{2}-k-1\geq\left(  \frac{1}{2}k-1\right)  n_{i}+\left\vert H^{\prime
}\right\vert \geq k^{2},
\]
a contradiction, showing that $H_{i}\subset S_{n_{i},k-1}^{+},$ and completing
the proof of \ Claim 3.\medskip

Our next goal is to prove that each component $J$ of $G_{w}$ is isomorphic to
$S_{\left\vert J\right\vert ,k-1}^{+}.$ Since $G$ has maximal $q\left(
G\right)  ,$ it is enough to prove that each component $J$ of $G_{w}$
satisfies $J\subset S_{\left\vert J\right\vert ,k-1}^{+}.$

Let\ $J$ be a component of $G_{w}.$ Note that $J$ contains exactly one
component $F$ of $H,$ as otherwise it would consist solely of vertices from
$H^{\prime},$ which are at most $k^{2}-k-1,$ while $J$ has more than $3k^{2}$
vertices. Set $m:=\left\vert F\right\vert ;$ Claim 3 implies that $F\subset
S_{m,k-1}^{+}.$ Write $A$ for the set of $k-1$ dominating vertices of
$S_{m,k-1}^{+};$ let $B:=V\left(  F\right)  \backslash A$ and
\[
C:=V\left(  J\right)  \backslash V\left(  F\right)  .
\]

Since $\delta\left(  F\right)  \geq k-1,$ the bipartite subgraph of $F$
induced by the vertex classes $A$ and $B$ contains at least $\left\vert
A\right\vert \left\vert B\right\vert -2$ edges. If $G\left[  B\right]  $
contains an edge, then each vertex in $B$ is endvertex of a path
$P_{2k}\subset F;$ since $P_{2k+1}\nsubseteq J$, each vertex of $C$ may be
joined only to vertices from $A.$ Therefore, $J\subset S_{\left\vert
J\right\vert ,k-1}^{+}$ as long as $G\left[  B\right]  $ contains an edge.

Assume therefore that the set $B$ is independent. Together with $\delta\left(
F\right)  \geq k-1,$ this assumption implies that $A$ and $B$ induce a
complete bipartite graph in $G.\medskip$

\textbf{Claim 4. \ }\emph{The set }$C$\emph{ is independent.} \medskip

\emph{Proof. }Let
\begin{align*}
C_{A}  &  :=\left\{  u:u\in C\text{ \ \ and \ \ }\Gamma\left(  u\right)  \cap
A\neq\varnothing\right\}  ,\\
C_{B}  &  :=\left\{  u:u\in C\text{ \ \ and \ \ }\Gamma\left(  u\right)  \cap
B\neq\varnothing\right\}  ,\\
C^{\prime}  &  :=C\backslash C_{B}.
\end{align*}
Our main goal is to prove that the set $C^{\prime}$ is independent, which
easily implies that $C$ is independent as well. Assume for a contradiction
that $G\left[  C^{\prime}\right]  $ contains edges. This fact implies that
$C_{B}=\varnothing,$ as $P_{2k+1}\nsubseteq J$. For the same reason we see
that $G\left[  C^{\prime}\right]  $ contains no $P_{4}$ or cycles.

Further, $G\left[  C^{\prime}\right]  $ contains no isolated vertices. Indeed,
if $u\in C^{\prime}$ and $u$ is an isolated vertex in $G\left[  C^{\prime
}\right]  ,$ then it has to be joined to all vertices of $A$ as $q\left(
G\right)  $ is maximal; we see that $u$ has $k-1$ neighbors in $H$\ and so $u$
cannot be removed by the procedure of Claim 2, a contradiction.

Hence $G\left[  C^{\prime}\right]  $ is a disjoint union of edges and stars.
Note that if $S$ is a star in $G\left[  C^{\prime}\right]  $ of order at least
$3,$ then its center belongs to $C_{A},$ but no other vertex of $S$ belongs to
$C_{A},$ as $P_{2k+1}\nsubseteq J.$

Next, assume that $G\left[  C^{\prime}\right]  $ contains a star $S$ of order
$t\geq3,$ such that its center $i$ is joined to exactly one vertex $u\in A;$
let $u_{1},\ldots,u_{t-1}$ be the peripheral vertices of $S$. Remove the edges
$\left\{  u_{1},i\right\}  ,\ldots,\left\{  u_{t-1},i\right\}  $ and add the
edges $\left\{  u_{1},u\right\}  ,\ldots,\left\{  u_{t-1},u\right\}  ;$ write
$G^{\prime}$ for the resulting graph, which obviously satisfies the hypothesis
of Theorem A. We shall show that $q\left(  G^{\prime}\right)  >q\left(
G\right)  .$ First, by symmetry,
\[
x_{u_{1}}=\cdots=x_{u_{t-1}}=p.
\]
Next we have
\begin{align*}
qp  &  =2p+x_{i}+x_{w},\\
qx_{u}  &  \geq\left(  m+1\right)  x_{u}+x_{i}+x_{w},
\end{align*}
and after some algebra we find that $x_{u}>p.$ Also,
\[
qx_{i}=\left(  t+1\right)  x_{i}+\left(  t-1\right)  p+x_{u}+x_{w}<\left(
t+1\right)  x_{i}+tx_{u}+x_{w},
\]
and after some algebra we find that%
\[
\left(  q-m-1+t\right)  x_{u}>\left(  q-t\right)  x_{i},
\]
implying that $x_{u}>x_{i}.$ Now a brief calculation shows that $q\left(
G^{\prime}\right)  >q\left(  G\right)  ,$ contradicting the choice of $G.$

Hence, if $S\subset G\left[  C^{\prime}\right]  $ is a star of order at least
$3,$ then is center is joined to more than one vertex in $A.$

Next assume that $G\left[  C^{\prime}\right]  $ is connected. If $G\left[
C^{\prime}\right]  $ is just one edge, then $J\subset S_{m,k-1}^{+},$ and so
$J=S_{m,k-1}^{+}$ as $q\left(  G\right)  $ is maximal. If $G\left[  C^{\prime
}\right]  $ is a star $S$ of order at least $3,$ then its center $i$ is joined
to more than one vertices $A.$ Since $q\left(  G\right)  $ is maximal, $i$
must be joined to all vertices in $A;$ thus $i$ has $k-1$ neighbors in $H$ and
so $i$ cannot be removed by the procedure of Claim 2, a contradiction. So
$G\left[  C^{\prime}\right]  $ has more than one component.

Finally, assume that $u$ is a vertex of $A$ having a neighbor in $C^{\prime}.$
If $G\left[  C^{\prime}\right]  $ contains a star $S$, then the center of $S$
may be joined only to $u,$ as otherwise we can find a $P_{2k+1}\subset J$
using an additional component of $G\left[  C^{\prime}\right]  .$ Hence
$G\left[  C^{\prime}\right]  $ contains only disjoint edges. Clearly each edge
of $G\left[  C^{\prime}\right]  $ contains a vertex of $C_{A}$ and all such
vertices must be joined exactly to $u$ as $P_{2k+1}\nsubseteq J.$ Since
$q\left(  G\right)  $ is maximal,\ we see that $A$ induces a complete graph,
and both ends of each disjoint edge in $G\left[  C^{\prime}\right]  $ are
joined to $u.$ We shall show that in this case $q\left(  G\right)  $ is not maximal.

Indeed, let $v\in A\backslash\left\{  u\right\}  $ and let $\mathbf{x}=\left(
x_{1},\ldots,x_{n}\right)  $ be a positive unit eigenvector to $q\left(
G\right)  .$ Suppose that $\left\{  i,j\right\}  $ is an isolated edge in
$G\left[  C^{\prime}\right]  .$ Remove $\left\{  i,j\right\}  ,$ add the edges
$\left\{  i,v\right\}  $ and $\left\{  j,v\right\}  ;$ write $G^{\prime}$ for
the resulting graph, which obviously satisfies the hypothesis of Theorem A. By
symmetry, $x_{i}=x_{j};$ note that
\begin{align*}
qx_{v}  &  =mx_{v}+\sum_{s\in A\cup B\backslash\left\{  v\right\}  }%
x_{s}+x_{w}>mx_{v}+x_{u}+x_{w},\\
qx_{i}  &  =3x_{i}+x_{u}+x_{j}+x_{w}=4x_{i}+x_{u}+x_{w}.
\end{align*}
After some algebra we find that
\[
x_{v}>\frac{q-4}{q-m}x_{i}>\frac{q-4}{q-2k^{2}}x_{i}>x_{i},
\]
and a brief calculation shows that $q\left(  G^{\prime}\right)  >q\left(
G\right)  ,$ contradicting the choice of $G.$ This completes the proof that
$C^{\prime}$ is independent. Therefore $C$ is also independent, as no edge in
$C$ can be incident to a vertex in $B$ as $P_{2k+1}\nsubseteq J.$ This
completes the proof of Claim 4.\medskip

Further, we can assume that $C^{\prime}=\varnothing,$ as if $u$ is vertex in
$C^{\prime},$ then, in $G_{w},$ $u$ can be joined only to vertices of $A;$
since $q\left(  G\right)  $ is maximal, $u$ is joined to each vertex in $A;$
thus $i$ has $k-1$ neighbors in $H$ and so $i$ cannot be removed by the
procedure of Claim 2, a contradiction.\medskip

\textbf{Claim 5. }\emph{Either }$J\subset S_{m+1,k-1}^{+}$ \emph{or} \emph{the
set }$C$\emph{ is empty.\medskip}

\emph{Proof. }Observe that if a vertex $u\in C_{B}$ is joined to two or more
vertices from $B,$ then $P_{2k+1}\subset J,$ so each vertex in $C_{B}$ is
joined to exactly one vertex in $B.$ Now if $C_{B}$ has two distinct vertices
that are joined to two distinct vertices in $B,$ then clearly $P_{2k+1}\subset
J.$ Therefore, all vertices in $C_{B}$ are joined to the same vertex of $B,$
say $u\in B$.

Suppose that $C_{A}\neq\varnothing$ and let $v\in C_{A}.$ Clearly
$C_{B}\backslash C_{A}=\varnothing$, as $P_{2k+1}\nsubseteq J;$ therefore
$C_{A}=C_{B}=\left\{  v\right\}  ,$ implying that $J\subset S_{m+1,k-1}^{+}.$

Hence we may assume that $C_{A}=\varnothing,$ that is to say, all vertices in
$C$ are joined only to vertices in $B.$ Now the graph $J\ $looks as follows:
the set $C$ is independent and all vertices of $C$ are joined exactly to the
vertex $u\in B.$ We shall show that in this case $q\left(  G\right)  $ is not maximal.

Indeed, choose a vertex $v\in A$ and let $C=\left\{  u_{1},\ldots
,u_{t}\right\}  ;$ remove the edges $\left\{  u_{1},u\right\}  ,\ldots
,\left\{  u_{t},u\right\}  $ and add the edges $\left\{  u_{1},v\right\}
,\ldots,\left\{  u_{t},v\right\}  ;$ write $G^{\prime}$ for the resulting
graph, which satisfies the hypothesis of Theorem A. We shall show that
$x_{v}>x_{u},$ which obviously implies that $q\left(  G^{\prime}\right)
>q\left(  G\right)  ,$ contradicting the choice of $G.$ Note that by
symmetry,
\[
x_{u_{1}}=\cdots=x_{u_{t}}\text{\ \ and \ \ }x_{s}=x_{v}\text{ for every }s\in
A.
\]
Therefore, letting $x_{u_{1}}=p,$ wee see that
\begin{align*}
qx_{v}  &  >mx_{v}+x_{u}+x_{w},\\
qx_{u}  &  =\left(  k+t\right)  x_{u}+\left(  k-1\right)  x_{v}+tp+x_{w},\\
qp  &  =2p+x_{u}+x_{w}.
\end{align*}
After some algebra we first find that $x_{v}>p,$ and then $x_{v}>x_{u},$ as
claimed. This completes the proof of Claim 5.\medskip

At this stage we see that each component $J$ of\ $G_{w}$ satisfies
$J=S_{\left\vert J\right\vert ,k-1}^{+};$ to finish the proof we must show
that $G_{w}$ has only one component.\emph{ }Assume for a contradiction that
$G_{w}$ contains two components, say $G_{1}=S_{n_{1},k-1}^{+}$ and
$G_{2}=S_{n_{2},k-1}^{+}.$ We shall show that in this case $q\left(  G\right)
$ is not maximal, which contradicts the choice of $G.$

Let $u_{1},\ldots,u_{k-1}\ $and $v_{1},\ldots,v_{k-1}$ be the dominating
vertices in $G_{1}$ and $G_{2}.$ Let $\mathbf{x}=\left(  x_{1},\ldots
,x_{n}\right)  $ be a positive unit eigenvector to $q\left(  G\right)  .$ By
symmetry,
\[
x_{u_{1}}=\cdots=x_{u_{k-1}},\text{ \ \ }x_{v_{1}}=\cdots=x_{v_{k-1}},\text{
\ and \ \ \ }x_{u_{1}}\geq x_{v_{1}}.
\]

Now merge the components $G_{1}$ and $G_{2}$ into one component $F$ by
removing all edges of $G_{2}$ and joining the vertices of $G_{2}$ to each of
the vertices $u_{1},\ldots,u_{k-1}.$ Note that $F$ is isomorphic to
$S_{n_{1}+n_{2},\text{ }k-1}^{+}$ and $u_{1},\ldots,u_{k-1}$ are its
dominating vertices. Writing $G^{\prime}$ for the new graph, we shall show
that $q\left(  G^{\prime}\right)  >q\left(  G\right)  $. Indeed, let
\[
W:=V\left(  S_{n_{2},k-1}^{+}\right)  \backslash\left\{  v_{1},\ldots
,v_{k-1}\right\}
\]
and let $v^{\prime},v^{\prime\prime}\in W$ be the two exceptional vertices of
$G_{2}$ such that $\left\{  v^{\prime},v^{\prime\prime}\right\}  \in E\left(
G\right)  .$ By symmetry, $x_{v^{\prime}}=x_{v^{\prime\prime}}$ and from the
eigenequations for $q$ we see that
\begin{align*}
qx_{v^{\prime}}  &  =(k+1)x_{v^{\prime}}+x_{v^{\prime\prime}}+\left(
k-1\right)  x_{v_{1}}+x_{w}=\left(  k+2\right)  x_{v^{\prime}}+\left(
k-1\right)  x_{v_{1}}+x_{w},\\
qx_{v_{1}}  &  =n_{2}x_{v_{1}}+\left(  k-2\right)  x_{v_{1}}+x_{v^{\prime}%
}+x_{v^{\prime\prime}}+x_{w}+\sum_{s\in W\backslash\left\{  v^{\prime
},v^{\prime\prime}\right\}  }x_{s}\\
&  >\left(  n_{2}+k-2\right)  x_{v_{1}}+2x_{v^{\prime}}+x_{w}.
\end{align*}
Excluding $x_{w}$ from these relations, after some algebra we see that
\[
\left(  q-n_{2}+1\right)  x_{v_{1}}>\left(  q-k\right)  x_{v^{\prime}}%
\]
and so $x_{v^{\prime\prime}}=x_{v^{\prime}}<x_{v_{1}}.$ Further,
\begin{align*}
q\left(  G^{\prime}\right)  -q\left(  G\right)   &  \geq\left\langle Q\left(
G^{\prime}\right)  \mathbf{x},\mathbf{x}\right\rangle -\left\langle Q\left(
G\right)  \mathbf{x},\mathbf{x}\right\rangle \\
&  =\sum_{i\in W}\left(  k-1\right)  \left(  \left(  x_{u_{1}}+x_{i}\right)
^{2}-\left(  x_{v_{1}}+x_{i}\right)  ^{2}\right) \\
&  +\left(  k-1\right)  ^{2}\left(  x_{u_{1}}+x_{v_{1}}\right)  ^{2}-2\left(
k-1\right)  \left(  k-2\right)  x_{v_{1}}^{2}-4x_{v^{\prime}}^{2}\\
&  >4\left(  k-1\right)  ^{2}x_{v_{1}}^{2}-2\left(  k-1\right)  \left(
k-2\right)  x_{v_{1}}^{2}-4x_{v_{1}}^{2}\geq0.
\end{align*}
This contradiction shows that indeed, $G_{w}=S_{n-1,k-1}^{+}$ and so
$G=S_{n,k}^{+}.$ Theorem A is proved and so is Theorem \ref{mt1}.
\end{proof}

\subsection{Proof of Theorem \ref{mt}}

\begin{proof}
Assume for a contradiction that $G$ is a $C_{2k+2}$-free graph of order
$n>400k^{2},$ with $q\left(  G\right)  \geq q\left(  S_{n,k}^{+}\right)  .$ To
prove the theorem we shall show that $G=S_{n,k}^{+}.$ For short, set
$q:=q\left(  G\right)  $ and $V:=V\left(  G\right)  .$

Our proof of Theorem \ref{mt} will be based on a careful analysis of the
Merris bound (\ref{Mb}). Thus, let $w\in V$ be a vertex for which the
expression
\[
d\left(  w\right)  +\frac{1}{d\left(  w\right)  }\sum_{\left\{  w,i\right\}
\in E\left(  G\right)  }d\left(  i\right)
\]
is maximal. First note that $d\left(  w\right)  \geq2k-1,$ as otherwise, using
(\ref{Mb}), we obtain a contradiction
\[
q\left(  G\right)  \leq d\left(  w\right)  +\frac{1}{d\left(  w\right)  }%
\sum_{\left\{  w,i\right\}  \in E\left(  G\right)  }d\left(  i\right)  \leq
d\left(  w\right)  +\Delta\left(  G\right)  \leq n+2k-3<q\left(  S_{n,k}%
^{+}\right)  .
\]

We shall show that $d\left(  w\right)  \geq n-2.$ Indeed, set $A:=\Gamma
\left(  w\right)  $ and $B:=V\left(  G\right)  \backslash\left(  \Gamma\left(
w\right)  \cup\left\{  w\right\}  \right)  .$ Obviously, $\left\vert
A\right\vert =d\left(  w\right)  $ and $\left\vert B\right\vert =n-d\left(
w\right)  -1.$ The assumption $C_{2k+2}\nsubseteq G\ $implies that the graph
$G_{w}$ contains no path $P_{2k+1}$ with both endvertices in $A$. Therefore,
using Lemma \ref{Nl}, we see that
\begin{align*}
d\left(  w\right)  +\frac{1}{d\left(  w\right)  }\sum_{\left\{  w,i\right\}
\in E\left(  G\right)  }d\left(  i\right)   &  =d\left(  w\right)
+1+\frac{2e\left(  A\right)  +e\left(  A,B\right)  }{d\left(  w\right)  }\\
&  \leq d\left(  w\right)  +1+\frac{\left(  2k-1\right)  d\left(  w\right)
+k\left(  n-d\left(  w\right)  -1\right)  }{d\left(  w\right)  }\\
&  =d\left(  w\right)  +k+\frac{k\left(  n-1\right)  }{d\left(  w\right)  }.
\end{align*}
Note that the function $x+\left[  k\left(  n-1\right)  -1\right]  /x$ is
convex for $x>0;$ hence, the maximum of the expression
\[
d\left(  w\right)  +\frac{k\left(  n-1\right)  }{d\left(  w\right)  }%
\]
is attained for the minimum or the maximum admissible values for $d\left(
w\right)  .$ Thus, if
\[
2k-1\leq d\left(  w\right)  \leq n-3,
\]
then
\begin{align*}
q\left(  G\right)   &  \leq d\left(  w\right)  +\frac{k\left(  n-1\right)
}{d\left(  w\right)  }\leq\max\left\{  2k-1+\frac{k\left(  n-1\right)  }%
{2k-1},n-3+\frac{k\left(  n-1\right)  }{n-3}\right\} \\
&  <n+2k-2-\frac{2\left(  k^{2}-k\right)  }{n+2k-3}<q\left(  S_{n,k}%
^{+}\right)  ,
\end{align*}
a contradiction, showing that $d\left(  w\right)  \geq n-2$.

At that stage we are left with two cases: $d\left(  w\right)  =n-1,$ covered
by Theorem \ref{mt1} and $d\left(  w\right)  =n-2,$ which will be disposed of
in the rest of the proof.

Let $v$ be the single vertex of $G$ such that $v\notin\Gamma\left(  w\right)
.$ Let $G^{\prime}$ be the graph obtained by adding the edge $\left\{
w,v\right\}  $ to $G.$ Since $\Delta\left(  G^{\prime}\right)  =n-1,$ and
$q\left(  G^{\prime}\right)  \geq q\left(  G\right)  \geq q\left(  S_{n,k}%
^{+}\right)  ,$ Theorem \ref{mt1} implies that $G^{\prime}$ contains a cycle
$C_{2k+2},$ which obviously contains the edge $\left\{  w,v\right\}  .$ Hence,
$G$ contains a path $P_{2k+2}$ with endvertices $w$ and $v,$ and moreover, $w$
is adjacent to all vertices of this path except $v.$ This is a definite
situation, and it is easy to see that if $d\left(  v\right)  \geq2,$ then
$C_{2k+2}\subset G;$ hence, $d\left(  v\right)  =1$.

Write $u$ for the neighbor of $v$, and let $\mathbf{x}=(x_{1},...,x_{n})$ be
the unit positive eigenvector to $q.$ The eigenequation for the vertex $v$
gives%
\[
qx_{v}=x_{v}+x_{u}%
\]
and so%
\[
x_{v}=\frac{1}{q-1}x_{u}.
\]
Since $d\left(  u\right)  \leq n-2$, there is a vertex $t\in\Gamma\left(
w\right)  \backslash\left(  \Gamma\left(  u\right)  \cup\left\{  u\right\}
\right)  .$ Then, from the eigenequation for $t$ we see that
\[
qx_{t}\geq x_{t}+x_{w},
\]
and so
\[
x_{t}\geq\frac{1}{q-1}x_{w}%
\]
The eigenequation for the vertex $w$ is%
\begin{equation}
qx_{w}=\left(  n-2\right)  x_{w}+\sum_{i\in V\backslash\left\{  w,v\right\}
}x_{i},\text{ } \label{ew}%
\end{equation}
while the eigenequation for the vertex $u$ implies that
\begin{equation}
qx_{u}=d\left(  u\right)  x_{u}+\sum_{i\in\Gamma\left(  u\right)  }x_{i}%
\leq\left(  n-2\right)  x_{u}+\sum_{i\in V\backslash\left\{  u,t\right\}
}x_{i}. \label{eu}%
\end{equation}
Subtracting (\ref{eu}) from (\ref{ew}), we find that%
\[
\left(  q-n+3\right)  \left(  x_{w}-x_{u}\right)  \geq x_{t}-x_{v}\geq\frac
{1}{q-1}x_{w}-\frac{1}{q-1}x_{u}%
\]
and so, $x_{w}\geq x_{u}.$

Let $G^{\prime}$ be the graph obtained from $G$ by removing the edge $\left\{
u,v\right\}  $ and adding the edge $\left\{  w,v\right\}  .$ Comparing the
quadratic forms of $Q\left(  G\right)  $ and $Q\left(  G^{\prime}\right)  ,$
we find that $q\left(  G^{\prime}\right)  \geq q\left(  G\right)  \geq
q\left(  S_{n,k}^{+}\right)  .$ However, $G^{\prime}\neq S_{n,k}^{+}$\ and
$\Delta\left(  G^{\prime}\right)  =n-1;$ hence Theorem \ref{mt1} implies that
$C_{2k+2}\subset G^{\prime},$ and consequently $C_{2k+2}\subset G,$ as no
cycle of $G^{\prime}$contains $v.$ This contradiction completes the proof of
Theorem \ref{mt}.
\end{proof}

\section{Concluding remarks}

Theorem \ref{mt} and the main result of \cite{Yua14},\cite{Yua14a} prove
completely Conjecture \ref{con1}. An important ingredient of our proof,
Theorem \ref{AS+}, which is a nonspectral extremal result, has been obtained
in \cite{NiYu13}. We would like to reiterate a similar, but yet unproven
conjecture for the spectral radius $\mu\left(  G\right)  ,$ raised in
\cite{Nik10c}.

\begin{conjecture}
\label{con2} Let $k\geq2$ and let $G$ be a graph of sufficiently large order
$n.$ If $G$ has no cycle of length $2k+2,$ then $\mu\left(  G\right)
<\mu\left(  S_{n,k}^{+}\right)  ,$ unless $G=S_{n,k}^{+}.$
\end{conjecture}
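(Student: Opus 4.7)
My proof proposal follows the architecture of Theorem~\ref{mt}, replacing each signless-Laplacian tool by its adjacency-matrix analogue. Write $\mu = \mu(G)$ and let $\mathbf{x}$ be the positive unit Perron eigenvector of $A(G)$. A preliminary calculation via the equitable partition of $S_{n,k}^{+}$ gives $\mu(S_{n,k}^{+}) = \sqrt{k(n-k)} + O(1)$ with an explicit next-order correction; combined with Hong's bound $\mu(G) \le \sqrt{2e(G) - n + 1}$, the hypothesis $\mu(G) \ge \mu(S_{n,k}^{+})$ already forces $e(G) \ge kn - O(k^{2})$, matching the analogue of~(\ref{b1}).

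The first real step is to locate a vertex of very large degree, playing the role of $w$ in Theorem~\ref{mt}. The natural substitute for the Merris bound is Hofmeister's inequality $\mu^{2}(G) \le \max_{u} \sum_{v \in \Gamma(u)} d(v)$. Let $w$ attain this maximum, let $A = \Gamma(w)$ and $B = V(G) \setminus (A \cup \{w\})$. Since $G$ is $C_{2k+2}$-free and $w$ is adjacent to every vertex of $A$, the graph $G_{w}$ contains no $P_{2k+1}$ with both endvertices in $A$, so Lemma~\ref{Nl} yields
\[
\mu^{2}(G) \;\le\; 2e(A) + e(A,B) + |A| \;\le\; 2k\,d(w) + k\bigl(n - d(w) - 1\bigr).
\]
Using $\mu^{2} \ge k(n-k) + O(1)$ and a convexity argument in $d(w)$ (exactly as in the convex-function step following~(\ref{b0}) in the proof of Theorem~\ref{mt}), this should pin down $d(w) \in \{n-1, n-2\}$. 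The $d(w)=n-2$ case is then reduced to the dominating-vertex case by the same eigenvector-comparison/edge-relocation trick used at the end of Theorem~\ref{mt}.

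For the core case $\Delta(G) = n-1$, I would develop an adjacency analogue of Lemma~\ref{l1}: small components of $G_{w}$ may be deleted while preserving $\mu(H) > \mu(S_{|H|,k}^{+})$. Once this is in place, the structural analysis of Theorem~\ref{mt1} should transfer almost verbatim: the peeling procedure of Claim~2 produces $H \subset G_{w}$ with $\delta(H) \ge k-1$; Theorem~\ref{AS+} and Corollary~\ref{cAS+} force each component of $H$ into $S_{\cdot,k-1}^{+}$; Claim~3 extends this to components of $G_{w}$; and Claims~4 and~5 reduce to eigenvector-based edge-swap computations of the form
\[
\mu(G') - \mu(G) \;\ge\; \bigl\langle A(G')\mathbf{x},\mathbf{x}\bigr\rangle - \bigl\langle A(G)\mathbf{x},\mathbf{x}\bigr\rangle \;>\; 0,
\]
each of which is an elementary computation analogous to those in Claim~4 and the final merger argument of the paper, just with single eigenvector terms in place of $(x_{i}+x_{j})^{2}$ sums.

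The hardest step, and the likely reason Conjecture~\ref{con2} has remained open, will be the adjacency version of Lemma~\ref{l1}. In the $Q$-case the eigenequation at $w$ yields $x_{v} \le x_{w}/(q - 2n_{1} + 1)$ for $v$ in a small component $G_{1}$, which is tight because $q \ge n + 2k - 3$ is of order $n$; for the adjacency matrix $\mu$ is only of order $\sqrt{n}$, so the analogous denominator $\mu - 2n_{1} + 1$ becomes useless as soon as $n_{1} \gg \sqrt{n}$. To overcome this scaling mismatch one must first bootstrap, for instance by bounding the local Perron radius of each component $G_{i} \subset G_{w}$ by $O(\sqrt{k|G_{i}|})$ via Hofmeister applied inside $G_{i}$, and then feed this back to show that the tail $\sum_{v \in V(G_{i})} x_{v}^{2}$ is genuinely negligible. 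Getting a pruning inequality that survives the smaller spectral gap, all the way down to $|H| \gtrsim n/\text{const}$, is the main obstacle; once past it, the remainder of the proof runs in close parallel to Theorems~\ref{mt1} and~\ref{mt}.
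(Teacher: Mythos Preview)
The paper does not prove this statement: Conjecture~\ref{con2} is explicitly presented as open (``a similar, but yet unproven conjecture''), so there is no paper proof to compare your proposal against. What you have written is not a proof either, but a programme, and you yourself flag the adjacency analogue of Lemma~\ref{l1} as ``the main obstacle'' without resolving it. That alone means the proposal cannot be counted as a proof of the conjecture.

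Beyond that acknowledged gap, there is a concrete failure earlier in your outline. After applying your Hofmeister-type bound and Lemma~\ref{Nl} you obtain
\[
\mu^{2}(G)\;\le\;2e(A)+e(A,B)+|A|\;\le\;(2k-1)\,d(w)+k\bigl(n-d(w)-1\bigr)+d(w)\;=\;k\bigl(d(w)+n-1\bigr).
\]
This is \emph{linear} and increasing in $d(w)$, not convex; combined with $\mu^{2}\ge k(n-k)+O(1)$ it yields only the trivial inequality $d(w)\ge 1-k$. In the $Q$-index proof the Merris bound produces $q\le d(w)+k+k(n-1)/d(w)$, whose genuine convexity in $d(w)$ is exactly what pins $d(w)$ near $n-1$; that mechanism is lost here, so your claimed reduction to $d(w)\in\{n-1,n-2\}$ does not follow from the inequalities you wrote down. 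You would need a genuinely different device to force a vertex of near-maximum degree in the adjacency setting, and the paper offers no hint for one --- which is consistent with its remark that Conjecture~\ref{con2} ``turned out to be more difficult than Conjecture~\ref{con1}''.
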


It is somewhat surprising that Conjecture \ref{con2} turned out to be more
difficult than Conjecture \ref{con1}, given that in general it is easier to
work with the spectra radius than with the $Q$-index of a graph. Finally, let
us note that the corresponding problem about the maximum number of edges in a
$C_{2k}$-free graph of order $n$ is notoriously difficult and is solved only
for very few values of $k.$\bigskip

\end{document}